\title[Genus theory and $\varepsilon$-conjectures]
{Genus theory and $\varepsilon$-conjectures \\ on $p$-class groups} 
\author{Georges Gras}
\address{Villa la Gardette, 4 Chemin Ch\^ateau Gagni\`ere, 
F--38520, Le Bourg d'Oisans.}
\email{g.mn.gras@wanadoo.fr}
\keywords{class groups; $\varepsilon$-conjecture; 
class field theory; $p$-ramification; genus theory}
\subjclass[2010]{11R29, 11R37, 08-04}
\newtheorem{theorem}{Theorem}[section]
\newtheorem{lemma}[theorem]{Lemma}
\newtheorem{conjecture}[theorem]{Conjecture}
\newtheorem{proposition}[theorem]{Proposition}
\theoremstyle{definition}
\newtheorem{definition}[theorem]{Definition}
\theoremstyle{remark}
\newtheorem{remark}[theorem]{Remark}
\newtheorem{remarks}[theorem]{Remarks}
\newcommand{\order}{\raise1.5pt \hbox{${\scriptscriptstyle \#}$}}
\def\lien{\mathrel{\mkern-4mu}}
\def\too{\relbar\lien\rightarrow}
\def\tooo{\relbar\lien\relbar\lien\too}
\newcommand{\ds}{\displaystyle}
\newcommand{\R}{\mathbb{R}}
\newcommand{\Q}{\mathbb{Q}}
\newcommand{\Z}{\mathbb{Z}}
\newcommand{\F}{\mathbb{F}}
\newcommand{\Cl}{{\mathcal C}\hskip-2pt{\ell}}
\newcommand{\cl}{c\hskip-1pt{\ell}}
\newcommand{\plus}{\ds\mathop{\raise 2.0pt \hbox{$\bigoplus$}}\limits}
\newcommand{\prd}{\ds\mathop{\raise 2.0pt \hbox{$\prod$}}\limits}
\newcommand{\sm}{\ds\mathop{\raise 2.0pt \hbox{$\sum$}}\limits}
\newcommand{\wt}{\widetilde}
\newcommand{\ov}{\overline} 
\newcommand{\wh}{\widehat} 
\begin{document}

\date{June 25, 2019} %

\begin{abstract} We suspect that the ``genus part'' of the class 
number of a number field $K$ may be an obstruction for an ``easy 
proof'' of the classical $p$-rank $\varepsilon$-conjecture for $p$-class 
groups and, a fortiori, for a proof of the ``strong $\varepsilon$-conjecture'': 
$\order (\Cl_K \otimes \Z_p) \ll_{d,p,\varepsilon} 
(\sqrt{D_K}\,)^\varepsilon$ for all $K$ of degree $d$. 
We analyze the weight of genus theory in this inequality by means of an
infinite family of degree $p$ cyclic fields with many ramified primes, then 
we prove the $p$-rank $\varepsilon$-conjecture: 
$\order (\Cl_K \otimes \F_p) \ll_{d,p,\varepsilon} 
(\sqrt{D_K}\,)^\varepsilon$, for $d=p$ and the family of degree $p$ cyclic  
extensions (Theorem \ref{thmf}) then sketch the case of arbitrary 
base fields. The possible obstruction for the strong form, in the degree $p$ 
cyclic case, is the order of magnitude of the set of ``exceptional'' $p$-classes 
given by a well-known non-predictible algorithm, but controled thanks to 
recent density results due to Koymans--Pagano. Then we compare the 
$\varepsilon$-conjectures with some $p$-adic conjectures, 
of Brauer--Siegel type, about the torsion group ${\mathcal T}_K$ 
of the Galois group of the maximal abelian $p$-ramified 
pro-$p$-extension of totally real number fields $K$. We give 
numerical computations with the corresponding PARI/GP programs.
\end{abstract}

\maketitle

\tableofcontents 

\section{Introduction} 
For any number field $K$, we denote by $\Cl_K$ the 
class group of $K$ in the restricted sense and by $\Cl_K \otimes \Z_p$ its 
$p$-class group, for any prime number $p$; to avoid any ambiguity, we shall
write $\Cl_K \otimes \F_p$ for the ``$p$-torsion group'', often denoted
$\Cl_K[p]$ in most papers, only giving the $p$-rank ${\rm rk}_p(\Cl_K)$ of 
$\Cl_K$.
One knows the following classical result (weak form of theorems
of Brauer, Brauer--Siegel, Tsfasman--Vlad\u{u}\c{t}--Zykin \cite{TV,Zy}): 

\medskip
{\it For all $\varepsilon >0$ there exists a constant $C_{d, \varepsilon}$ such that
$\order \Cl_K \leq C_{d, \varepsilon} \cdot (\sqrt{ D_K}\,)^{1+\varepsilon}$,
where $d$ is the degree of $K$ and $D_K$ the absolute value of its discriminant.}

\medskip
But it is clear that, arithmetically, the behaviour of the $p$-Sylow subgroups of 
$\Cl_K$ depends, in conflicting manners (regarding $p$), on many parameters
(signature, ramification, prime divisors of $d$, action of Galois groups, etc.).

\smallskip
After the Cohen--Lenstra--Martinet, Adam--Malle, Delaunay--Jouhet, Gerth,
Koymans--Pagano,... heuristics, conjectures, or density statements,
on the order and structure of $\Cl_K \otimes \Z_ p$ 
\cite{CL,CM,AM,Mal,DJ,Ge,KP}, many authors study and
prove inequalities of the form $\order (\Cl_K \otimes \F_ p) \leq C_{d, p,\varepsilon} 
\cdot (\sqrt{D_K}\,)^{\,c+\varepsilon}$, with positive constant $c <1$ as small 
as possible (e.g., under GRH, the inequality $\order (\Cl_K \otimes \F_p) 
\leq C_{d,p,\varepsilon} \cdot (\sqrt{D_K}\,)^{\,1 - \frac{1}{p\,(d-1)} + \varepsilon}$
\cite[Proposition 1]{EV}; see also \cite[\S\,1.1]{FW} for more examples and 
comments). The various links between this $\varepsilon$-conjecture and the 
above classical heuristics (or results) are described in \cite[\S\,1.1, Theorem 1.2, 
Remark 3.3]{P-TB-W}.

\smallskip
For a general history upon today about such inequalities, 
we refer to some recent papers of the bibliography (e.g., 
\cite{EPW,EV,FW,P-TB-W,W}) in which the reader can have a more 
complete list of recent contributions.

 \smallskip
For short, we shall call ``$p$-rank $\varepsilon$-conjecture'' the case:
$$\order (\Cl_K \otimes \F_p)  \leq 
C_{d, p,\varepsilon}\cdot (\sqrt{D_K}\,)^\varepsilon ,$$

and ``strong $\varepsilon$-conjecture'' the case:
$$\order (\Cl_K \otimes \Z_p) \leq C_{d, p,\varepsilon}
\cdot (\sqrt{D_K}\,)^\varepsilon. $$

These kind of results do not separate the case of totally real fields, for which 
we know that the class number is rather small, regarding the case 
of non totally real fields (see the tables in Washington's book \cite{Wa}
and papers by Schoof as \cite{Sc1,Sc2}, among many contributions; 
however, some real fields may have exceptional large class numbers 
regarding $D_K$ \cite{Da,DaKM}). 
Moreover, by nature, the results that we have quoted deal with upper 
bounds of the $p$-rank ${\rm rk}_p(\Cl_K)$ and precisely we shall 
see that genus theory gives, when it applies, large and maximal $p$-ranks
and possibly unbounded orders, which probably makes harder 
proofs by classical complex analytic way.

\begin{remark}\label{atinfty}
Nevertheless, many arguments and computations are in 
favor of the strong $\varepsilon$-conjecture
$\order (\Cl_K \otimes \Z_ p) \leq C_{d, p,\varepsilon} 
\cdot (\sqrt{D_K}\,)^{\varepsilon}$, except possibly for
subfamilies of density zero, even if such a conjecture is 
``a conjecture at infinity for fixed $p$'' because, as we shall 
see in numerical calculations, the constants $C_{d, p,\varepsilon}$ 
are enormous, especially for $d=p$,
so that, for ``usual fields'', the inequalities are trivial 
up to some huge values of the discriminant. 
So it will be difficult to give convincing computations ``at infinity''.
\end{remark}

In this context, we shall only prove the case of the $p$-rank 
$\varepsilon$-conjecture for degree $p$ cyclic fields $K$ (Theorem \ref{thmf}):

\medskip
{\bf Main Theorem.} {\it Denote by $\Cl_K$ the class group of any number field $K$.
Let $p \geq 2$ be a prime number; then
the $p$-rank $\varepsilon$-conjecture saying that:
$\order (\Cl_K \otimes \F_p) \ll_{d,p,\varepsilon} (\sqrt{D_K}\,)^\varepsilon$ for all $K$
of degree $d$, is true for the subfamily of all cyclic extension $F/\Q$ 
of degree $d=p$.}

\medskip
For these fields, with Galois group $G =: \langle \sigma \rangle$, 
we shall use the exact sequence:
$$1\to (\Cl_K \otimes \Z_ p)^G \too \Cl_K \otimes \Z_ p
 \too (\Cl_K \otimes \Z_ p)^{1-\sigma} \to 1$$
 
and consider the ``non-genus part'' $(\Cl_K \otimes \Z_ p)^{1-\sigma}$ 
(or set of ``exceptional $p$-classes'') as a random object for which 
densities results are known (see Remark \ref{KP}).

\smallskip
The case $d=p=2$ is known from Gauss genera
theory, but its generalization is not obvious since for $p=2$ the 
$p$-rank is canonical (given by Chevalley's formula \eqref{chevalley})
while for $p>2$ it depends on the algorithm which determines the 
complete structure of $\Cl_K \otimes \Z_ p$; indeed, for $d=p=2$ and any class 
$\gamma$ such that $\gamma^2=1$, we can write
$\gamma^{\sigma-1} = \gamma^{\sigma +1-2} =
{\rm N}_{K/\Q}(\gamma) \, \gamma^{-2}=1$, which does not work for $d=p>2$.

\smallskip
To our knowledge, for $d=p>2$, only the case of cyclic cubic fields is
proved \cite[Corollary~1, case (3)]{EV} for the $3$-rank $\varepsilon$-conjecture.

\smallskip
For CM fields, we shall try to put the ``minus part'' of $\Cl_K \otimes \Z_p$
in ``duality'' with the ``plus part'' of the torsion group ${\mathcal T}_K$ 
of the Galois group of the maximal abelian $p$-ramified pro-$p$-extension 
of $K$ for which, on the contrary, we know that there is no complete strong
$\varepsilon$-conjecture because of some explicit families of density zero.

\smallskip
This suggests that, for any given $p$, the strong $\varepsilon$-conjecture 
may be true ``for almost all field'' in a sense to be specified.

\section{A possible obstruction due to exceptional $p$-classes}

We shall illustrate the above comments with a family of fields with
optimal $p$-ranks, and give numerical illustrations with PARI/GP \cite{P} programs.

\subsection{Definition of the family $(F_{N,p})_{N \geq 1}$}
We consider a fixed prime $p \geq 2$ and the sequence of all odd prime numbers 
$\ell_k$, totally split in $\Q(\mu_p^{})$, whence such that $\ell_k \equiv 1 \pmod p$. 
For $p>2$, let $F_{N,p}$ be any cyclic extension of degree $p$, 
of global conductor $f_{N,p} = \prod_{k=1}^N \ell_i$,  for any $N\geq 1$.
For $p=2$ we shall consider $F_{N,2} := \Q(\sqrt{\pm \,\ell_1 \cdots \ell_N})$ 
$=\Q(\sqrt{\pm \,3 \cdot 5 \cdots \ell_N})$ with $\pm\,$ such that $2$ be unramified, 
and consider the restricted sense for class groups, which is more canonical for our
purpose.
We may consider degree $p$ cyclic  extensions ramified at $p$ without any
modification of the forthcoming reasonings, except an useless complexity 
of redaction (this modifies the above conductors up to the constant factor 
$p^2$ or $8$).

\smallskip
The discriminant $D_{N,p}$ of $F_{N,p}$ is the product of the conductors 
associated to each nontrivial character of $G_{N,p} := {\rm Gal}(F_{N,p}/\Q)$, 
thus $D_{N,p} = f_{N,p}^{p - 1}$. There are $(p-1)^{N-1}$ such fields, all
contained in $\Q(\mu_{\ell_1\cdots \ell_N}^{})$.

\smallskip
The Chevalley formula \cite{Ch} gives the number of ambiguous classes 
(i.e., invariant by $G_{N,p}$) which is equal to the genus number for cyclic fields:
\begin{equation}\label{chevalley}
\order\big( \Cl_{F_{N,p}} \otimes \Z_p \big)^{G_{N,p}} = p^{N-1}.
\end{equation}

It is obvious that the group of ambiguous classes is 
$p$-elementary of $p$-rank $N-1 \leq {\rm rk}_p(\Cl_{F_{N,p}})$ 
and generated by the ramified primes ${\mathfrak l}_i \mid \ell_i$,
$1 \leq i \leq N$, with a (non-trivial) relation 
$\prod_{i=1}^N{\mathfrak l}_i^{n_i} = (a)$, $a \in F_{N,p}^\times$,
due to the classical Kummer theory over $\Q(\mu_p)$
giving $F_{N,p}(\mu_p) = \Q(\mu_p)(\sqrt[p] {\alpha})$, 
$\alpha \in \Q(\mu_p)^\times$ depending of canonical Gauss sums.

\subsection{About the exceptional $p$-classes}\label{algo}
We know that $\order (\Cl_{F_{N,p}} \otimes \F_p)$ and 
$\order (\Cl_{F_{N,p}} \otimes \Z_p)$, both depend of a
non-predictible algorithm that we recall below; this shows that the 
so-called Brumer--Silverman--Duke--Zhang--Ellenberg--Venkatesh 
$\varepsilon$-conjecture on the $p$-ranks is not so different 
from the strong $\varepsilon$-conjecture, in a logical point of view, 
except that we shall see that the $p$-rank is an $O(N)$ contrary to 
$\order (\Cl_{F_{N,p}} \otimes \Z_p)$ whose order of magnitude is unknown.

\begin{definition}
Let $\delta(N) \geq 0$ be such that
$\order (\Cl_{F_{N,p}} \otimes \F_p) = p^{N-1 + \delta(N)}$
and let $\Delta(N) \geq \delta(N)$ be such that
$\order (\Cl_{F_{N,p}} \otimes \Z_p) = p^{N-1 + \Delta(N)}$.
\end{definition}

In other words, $p^{\Delta(N)} = \order (\Cl_{F_{N,p}} \otimes \Z_p)^{1-\sigma}$ 
measures what we shall call the set of ``exceptional $p$-classes'' (i.e., non-invariant)
obtained via the classical filtration of $\Cl_{F_{N,p}} \otimes \Z_p$ (a general 
theoretical and numerical approach was given in \cite{Gr4,Gr5} and \cite{St}
after the historical papers of Inaba \cite{In}, Redei--Reichardt \cite{RR}, 
Fr\"ohlich \cite{Fr1,Fr2} and others; for a wide generalization to 
ray class groups and a survey, see \cite{Gr3}). Then $N-1 + \delta(N)$ is the $p$-rank. 

\begin{remark}\label{KP}
During the writing of this paper we have been informed, by Peter Koymans 
and Carlo Pagano, of their work \cite{KP} proving that the $p$-class
groups of cyclic degree $p$ fields have, under GRH, the (explicit) distribution 
conjectured by Frank Gerth III \cite{Ge} and generalizing results of Jack Klys by
means of methods developed by Etienne Fouvry--J\"urgen Kl\"uners, 
Alexander Smith and others. 

\smallskip
These results prove that the strong $\varepsilon$-conjecture for degree 
$p$ cyclic fields $K$ is true (in the meaning that this occurs with probability~$1$), 
except possibly for very sparse families of fields of density zero. One deduces
this from \cite[Theorem 1.1]{KP} by proving that for any map $h : \R \to \R$,
such that $h(X)\to\infty$ as $X\to\infty$, we have 
the following density property about the sets of
exceptional $p$-classes of the $p$-class groups:
$$\ds \lim_{X \to \infty} \bigg [ \frac{\order \big\{K: \  \sqrt{D_K} < X\ \ \&\ \ 
 \order \big(\Cl(K) \otimes \Z_p \big)^{1-\sigma}
> h (\sqrt{D_K}\,) \,\big\}} {\order \{K: \  \sqrt{D_K} < X \}} \bigg] = 0. $$

The possible infinite ``bad families'' of degree $p$ cyclic fields $K$ are such that
for some fixed $\varepsilon$ and for any $C>0$:
$$\order \big(\Cl(K) \otimes \Z_p \big)^{1-\sigma} > \ds
C\ \frac{\big(\sqrt{D_K}\big)^\varepsilon}{p^{N-1}} =:
C\,h_ {\varepsilon}(\sqrt{D_K}), $$ 

where $N$ is the number of ramified primes of $K$; 
we shall verify Section \ref{estimation} that these functions $h_{\varepsilon}$ 
fulfill the condition at infinity. 

\smallskip
But the Theorem \ref{thmf} shall prove (unconditionally) the $p$-rank 
$\varepsilon$-conjecture. 
\end{remark}

\subsubsection{The algorithm}
Recall briefly the computation of 
$\delta(N)$ and $\Delta(N)$ in the purpose of some probabilistic considerations.

\smallskip
Let $F$ be {\it any} degree $p$ cyclic  extension of $\Q$ with $N \geq 1$ ramified primes, 
put $M:= \Cl_F \otimes \Z_p$, and let $\sigma$ be a generator of 
$G := {\rm Gal}(F/\Q) \simeq \Z/p\Z$.

\smallskip
Let $I_F$ be the group of ideals of $F$, prime to $p$, and let 
$M_i =: \cl_F({\mathcal I}_i)$, $i \geq 0$, 
${\mathcal I}_i \subset I_F$, defined inductively by $M_0:= 1$, and:
$$M_{i+1}/M_i := (M/M_i)^{G}, \ \hbox{for $0\leq i \leq m-1$}, $$
where $m \geq 1 $ is the
least integer $i$ such that $M_i = M$ (i.e., such that $M_{i+1} = M_i$).
Then $M_i= \{c \in M, \, c^{(1-\sigma)^i} =1 \}$ for all $i \geq 0$ and 
$M_1 = M^G$.

\smallskip
The sequence $\order  (M_{i+1}/M_i)$, $0 \leq i \leq m$, decreases 
to $1$ and is bounded by $\order  M_1$ due to the injective maps
$M_{i+1}/M_i \hookrightarrow M_i/M_{i-1}
\hookrightarrow \cdots \hookrightarrow M_2/M_1 \hookrightarrow M_1$
defined by the operation of $1-\sigma$. 

\smallskip
We have, for the fields $F$, the formulas \cite[Corollary 3.7]{Gr3}, 
for all $i\geq 0$:
$$ \order  \big( M_{i+1} / M_i \big) = \frac{p^{N -1}}
{(\Lambda_i :  \Lambda_i \cap {\rm N}_{F/\Q}(F^\times))} \ \, \&
\ \, \Lambda_i := \{x \in \Q^\times,\  (x) \in 
{\rm N}_{F/\Q}({\mathcal I}_i) \}, $$

with ${\mathcal I}_0=1$ and $\Lambda_0=1$.

\smallskip
The progression of the algorithm depends on the $x \in \Lambda_i$,
$(x) =  {\rm N}_{F/\Q}({\mathfrak A})$, ${\mathfrak A} \in {\mathcal I}_i$,
such that $x= {\rm N}_{F/\Q}(y)$, $y \in F^\times$, giving the equation:
$$(y) = {\mathfrak A}\cdot {\mathfrak B}^{1-\sigma}, $$ 

in which the solutions ${\mathfrak B}$ (non-predictible) become new 
elements to be added to ${\mathcal I}_{i}$ to built ${\mathcal I}_{i+1}
\supseteq {\mathcal I}_{i}$, then $\Lambda_{i+1} \supseteq \Lambda_i$, 
and so on.

\smallskip
Put, for all $i\geq 0$, $\order \big(M_{i+1}/M_i \big) := p^{N-1-t_i^N}$, 
$t_i^N \geq 0$, where $p^{t_i^N} \mid p^{N -1}$ is the index 
$(\Lambda_i :  \Lambda_i \cap {\rm N}_{F/\Q}(F^\times))$ (since 
$x \in \Lambda_i$ is norm of an ideal, it is locally a norm everywhere, 
except perhaps at the $N$ ramified primes, but the product formula for Hasse's 
normic symbols gives the above divisibility).
We have:
$$t_0^N=0 \leq t_1^N \leq \cdots \leq t_m^N=N-1. $$

Then
$\order M= \order (\Cl_F \otimes \Z_p) = \prd_{i\geq 0} \order  \big( M_{i+1} / M_i \big)
= p_{}^{m\,(N-1) - \sum_{i?= 1}^m \,t_i^N}$.

We have the following result that we shall use to test the
$\varepsilon$-conjecture.

\begin{lemma}[{\cite[Lemma 4.2]{Gr3}}] Let $F/\Q$ be a degree $p$ cyclic  
extension with $N \geq 1$ ramified primes. Then ${\rm rk}_p(\Cl_F) =
(p-1) \cdot (N-1) - \sum_{i=1}^{p-2} t_i^N$.
Whence this yields $\delta(N) = (p-2)\cdot (N-1) - \sum_{i=1}^{p-2} t_i^N$.
\end{lemma}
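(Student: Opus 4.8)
The plan is to exploit the fact that $M := \Cl_F \otimes \Z_p$ is not merely a $\Z_p[G]$-module but a module over the discrete valuation ring $\mathcal{O} := \Z_p[\zeta_p]$, for which the whole count becomes transparent. First I would observe that the norm element ${\rm N} := 1 + \sigma + \cdots + \sigma^{p-1}$ annihilates $M$: for any ideal ${\mathfrak a}$ of $F$ the product ${\mathfrak a}\,{\mathfrak a}^\sigma\cdots {\mathfrak a}^{\sigma^{p-1}}$ is the extension to $F$ of the ideal ${\rm N}_{F/\Q}({\mathfrak a})$ of $\Q$, hence principal since $\Cl_\Q = 1$; therefore $c^{\,1+\sigma+\cdots+\sigma^{p-1}} = 1$ for every $c \in M$. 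Since $\sigma^p - 1 = (\sigma - 1)\,{\rm N}$ and ${\rm N} = \Phi_p(\sigma)$ is the $p$-th cyclotomic polynomial in $\sigma$, the action of $\Z_p[G]$ on $M$ factors through $\Z_p[G]/({\rm N}) = \Z_p[\sigma]/(\Phi_p(\sigma)) \simeq \mathcal{O}$, the ring of integers of $\Q_p(\zeta_p)$. This $\mathcal{O}$ is a discrete valuation ring in which $\tau := 1 - \sigma$ maps to the uniformizer $\lambda := 1 - \zeta_p$, with residue field $\mathcal{O}/(\lambda) = \F_p$ and total ramification $(\lambda)^{p-1} = (p)$; concretely $p = u\,\tau^{p-1}$ for a unit $u \in \mathcal{O}$ (for $p=2$ this degenerates harmlessly to $\mathcal{O} = \Z_2$ and $\tau = 2$).

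With this structure in hand I would compute the $p$-rank directly. Since $p$ and $\tau^{p-1}$ generate the same ideal, $pM = \tau^{p-1}M$, so ${\rm rk}_p(\Cl_F) = \dim_{\F_p}(M/pM) = \dim_{\F_p}\!\big(M/\tau^{p-1}M\big)$. The descending chain $M \supseteq \tau M \supseteq \cdots \supseteq \tau^{p-1}M$ has successive quotients $\tau^i M/\tau^{i+1}M$ for $0 \le i \le p-2$, each killed by $\tau$ and hence an $\mathcal{O}/(\tau) = \F_p$-vector space whose dimension equals $\log_p$ of its order. The key bookkeeping step is the identity $\order(\tau^i M/\tau^{i+1}M) = \order(M_{i+1}/M_i)$: since $M_i = \{c : c^{(1-\sigma)^i} = 1\}$ is exactly the kernel of $\tau^i$ on $M$, the map $\tau^i$ induces an isomorphism $M/M_i \simeq \tau^i M$, and dividing the orders gives $\order(\tau^i M/\tau^{i+1}M) = \order(M_{i+1}/M_i) = p^{\,N-1-t_i^N}$. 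Thus $\dim_{\F_p}(\tau^i M/\tau^{i+1}M) = N - 1 - t_i^N$.

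Summing along the chain then yields
\[
{\rm rk}_p(\Cl_F) = \sum_{i=0}^{p-2}\big(N-1-t_i^N\big) = (p-1)(N-1) - \sum_{i=1}^{p-2} t_i^N,
\]
using $t_0^N = 0$; this is the asserted formula. Finally, since $\order(\Cl_F \otimes \F_p) = p^{\,{\rm rk}_p(\Cl_F)}$ and, by definition, $\order(\Cl_F \otimes \F_p) = p^{\,N-1+\delta(N)}$, subtracting the genus contribution $N-1$ gives $\delta(N) = (p-2)(N-1) - \sum_{i=1}^{p-2} t_i^N$.

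The only genuinely arithmetic input, and the step I expect to require the most care, is the first one: verifying that ${\rm N}$ annihilates the $p$-class group and identifying $\Z_p[G]/({\rm N})$ with the DVR $\Z_p[\zeta_p]$ so that $\tau$ becomes a uniformizer with $p \sim \tau^{p-1}$. Once that is secured, the truncation of the sum precisely at $i = p-2$ — which is exactly what separates the $p$-rank from $\log_p\order M = \sum_{i\ge 0}(N-1-t_i^N)$ — is forced by the ramification relation $(\lambda)^{p-1} = (p)$, and everything else is formal. I note that this is also the place where the contrast with the case $p=2$ of the introduction becomes visible: there $p-2=0$, the sum is empty, and one recovers the canonical Gauss value ${\rm rk}_2(\Cl_F) = N-1$ with $\delta(N)=0$, whereas for $p>2$ the exceptional classes enter precisely through the terms $t_1^N,\dots,t_{p-2}^N$.
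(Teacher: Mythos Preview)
Your proof is correct. The paper does not supply its own argument for this lemma but simply cites \cite[Lemma 4.2]{Gr3}; the $\Z_p[\zeta_p]$-module viewpoint you use --- identifying $1-\sigma$ with a uniformizer so that $pM=\tau^{p-1}M$ and reading off the $p$-rank as the sum $\sum_{i=0}^{p-2}\dim_{\F_p}(\tau^iM/\tau^{i+1}M)$ --- is exactly the mechanism underlying the $p^r$-rank formula the paper states immediately after the lemma, so your approach is the intended one.
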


We see that the $p$-rank may vary in the interval $[N-1, (p-1) \, (N-1)]$
and is always $O(N)$ as $N \to \infty$ which shall be
of a great importance; the $p$-rank is equal to $N-1$,
for all $F/\Q$ of degree $p$, if and only if $p=2$ as we have seen.
In the same manner, the $p^r$-ranks are given by the expressions:
$$\sm_{i=(r-1)\,(p-1)}^{r\,(p-1)-1} \big(N-1 - t_i^N \big) = 
(p-1) \cdot (N-1) - \sm_{i=(r-1)\,(p-1)}^{r\,(p-1)-1} t_i^N . $$

It is clear that the normic indices depend on the $\F_p$-ranks of 
$N \times N$-matrices of suitable Hilbert symbols \cite[Ch. VI, \S\,2]{Gr5} 
generalizing, for instance, R\'edei's matrices for the computation of the 
$4$-ranks of a quadratic field. 

\smallskip
But we emphasize the fact that if some heuristics on the $\F_p$-rank 
of these matrices are natural, {\it the number of steps of the algorithm} 
(i.e., $m$) only depends on distribution results (see Remark \ref{KP}).

\subsubsection{Examples of maximal rank}\label{rank6}
In fact, for $p>2$, the maximal rank,
equal to $(p-1) \cdot (N-1)$, is very rare. We remembered that in our thesis
\cite[p.\,39]{Gr5}, we gave the first example of cyclic cubic field, for which
${\rm rk}_3(\Cl) = 2\,(N-1)$; this example has conductor 
$f=9 \cdot 577 \cdot 757 \cdot 991$ (the program below 
shows that the property is fulfilled for the eight fields of conductor $f$):

\smallskip
\footnotesize
\begin{verbatim}
f=3895721091 N=4 P=x^3-1298573697*x+18010351461592  Cl=[9,3,3,3,3,3]
f=3895721091 N=4 P=x^3-1298573697*x-16034355152657  Cl=[3,3,3,3,3,3]
f=3895721091 N=4 P=x^3-1298573697*x+15707980296811  Cl=[3,3,3,3,3,3]
f=3895721091 N=4 P=x^3-1298573697*x-10132337699792  Cl=[3,3,3,3,3,3]
f=3895721091 N=4 P=x^3-1298573697*x-8835062576489   Cl=[3,3,3,3,3,3]
f=3895721091 N=4 P=x^3-1298573697*x-7829966535011   Cl=[9,3,3,3,3,3]
f=3895721091 N=4 P=x^3-1298573697*x+912031593193    Cl=[3,3,3,3,3,3]
f=3895721091 N=4 P=x^3-1298573697*x-221623244288    Cl=[9,3,3,3,3,3]
\end{verbatim}
\normalsize

\smallskip
We publish, for any further examples, a program computing
all the cyclic cubic fields such that there are exceptional $3$-classes; so in most cases the 
$3$-rank is $N$ (instead of $N-1$) and many examples have a non-trivial $9$-rank.

\smallskip
\footnotesize
\begin{verbatim}
{bf=7;Bf=10^6;for(f=bf,Bf,e=valuation(f,3);if(e!=0 & e!=2,next);F=f/3^e;
if(Mod(F,3)!=1||core(F)!=F,next);F=factor(F);Div=component(F,1);
d=component(matsize(F),1);for(j=1,d-1,D=component(Div,j);
if(Mod(D,3)!=1,break));for(b=1,sqrt(4*f/27),if(e==2 & Mod(b,3)==0,next);
A=4*f-27*b^2;if(issquare(A,&a)==1,if(e==0,if(Mod(a,3)==1,a=-a);
P=x^3+x^2+(1-f)/3*x+(f*(a-3)+1)/27);if(e==2,if(Mod(a,9)==3,a=-a);
P=x^3-f/3*x-f*a/27);N=omega(f);L=List;K=bnfinit(P,1);C8=component(K,8);
h=component(component(C8,1),1);Cl=component(component(C8,1),2);
dim=component(matsize(Cl),2);rho=0;for(i=1,dim,z=component(Cl,i);
v=valuation(z,3);if(v>=1,rho=rho+1;listput(L,3^v)));if(rho>=N,
print(" f=",f," N=",N," P=",P," Cl=",L)))))}

f=657 N=2 P=x^3-219*x-1241          Cl=[3,3]
f=657 N=2 P=x^3-219*x+730           Cl=[3,3]
(...)
f=3913 N=3 P=x^3+x^2-1304*x+17681   Cl=[3,3,3]
f=3913 N=3 P=x^3+x^2-1304*x-17536   Cl=[3,3,3]
(...)
f=4711 N=2 P=x^3+x^2-1570*x+19193   Cl=[9,3]
f=4711 N=2 P=x^3+x^2-1570*x-13784   Cl=[9,3]
f=4921 N=3 P=x^3+x^2-1640*x+23876   Cl=[3,3,3]
f=4921 N=3 P=x^3+x^2-1640*x-729     Cl=[3,3,3]
(...)
f=5383 N=2 P=x^3+x^2-1794*x+17744   Cl=[9,3]
f=5383 N=2 P=x^3+x^2-1794*x-9171    Cl=[9,3]
 (..)
f=15561 N=4 P=x^3-5187*x+141778     Cl=[9,3,3,3]
\end{verbatim}
\normalsize

\begin{remark}
The Galois action of $G = {\rm Gal}(F/\Q) \simeq \Z/p\Z$
on the non-$p$-part of the class group
gives rise, for all prime $q \mid \order \Cl_F$, to: 
$$\Cl_F \otimes \Z_q 
\simeq \Z[\mu_p]\big/ \prd_{{\mathfrak q} \mid q}
{\mathfrak q}^{n_{\mathfrak q}}, \ \hbox{$n_{\mathfrak q} \geq 0$}, $$

whose $\F_q$-dimension is a multiple of the
residue degree of $q$ in $\Q(\mu_p)/\Q$, which explains the rarity of such
divisibilities for large residue degrees.
\end{remark}

\subsection{Estimation of $C_{p,p, \varepsilon}$ 
for the fields $F_{N,p}$}\label{estimation}

As we have explained, we do not consider degree $p$ cyclic  extensions 
$F/\Q$ ramified at $p$. This shall modify the forthcoming computations 
by some $O(1)$ without any consequence on the statements since $p$
is fixed in all the sequel.

\smallskip
We need a lower bound of the $k$th prime number
$\ell_k \equiv 1 \pmod p$ to get an estimation of $f_{N,p}$. 

\smallskip
We thank G\'erald Tenenbaum for valuable indications for the good formula, from
\cite[Notes on Chapitre I, \S\,4.6]{T}, due to a result of Montgomery--Vaughan 
giving, for the $k$th prime number $\ell_k \equiv 1 \pmod p$:
\begin{equation}\label{rosser}
\ell_k >  \hbox{$\frac{p-1}{2}$} \cdot k\cdot {\rm log}\big(\hbox{$\frac{\ell_k}{p}$}\big) ,\ 
\hbox{for all $k\geq 1$}.
\end{equation}

Indeed, if $\pi(x;1,p) := \order \{\ell \leq x;\ \ell \equiv 1 \pmod p\}$ then:
$$\pi(x;1,p) \leq \ds \frac{2\,x}{(p-1)\, {\rm log} \big(\frac{x}{p} \big)}; $$ 
whence the result taking $x= \ell_k$ since $\pi(\ell_k;1,p)=k$.

\smallskip
We intend to test, for the family $(F_{N,p})_{N \geq 1}$
of discriminants $D_{N,p} := D_{F_{N,p}}$, the strong 
$\varepsilon$-conjecture, that is to say:
\begin{equation}\label{discriminant}
\order (\Cl_{F_{N,p}} \otimes \Z_p) =: p^{N-1+ \Delta(N)} \leq 
C_{p,p,\varepsilon} \cdot (\sqrt{D_{N,p}}\,)^\varepsilon,
\end{equation}

where $\Delta(N) = (m -1) \cdot (N-1) - \sum_{i = 1}^{m-1} t_i^N \geq 0$,
related to the set of exceptional $p$-classes, has only a probabilistic value 
depending on $m$ and the~$t_i^N$.

\smallskip
It will be easy, from the forthcoming calculations, to test the $p$-rank 
$\varepsilon$-conjecture, $\order (\Cl_{F_{N,p}} \otimes \F_p) \leq 
C_{p,p, \varepsilon} \cdot (\sqrt{D_{N,p}}\,)^\varepsilon$, but considering
instead the weaker inequality:
$$p^{N-1+ \delta(N)} = p^{(p-1) \,(N-1)- \sum_{i=1}^{p-2} t_i^N} \leq 
C_{p,p,\varepsilon} \cdot (\sqrt{D_{N,p}}\,)^\varepsilon, $$

even in the less favorable case $t_i^N = 0$, for $1 \leq i \leq p-2$
and replacing for all $N$, $D_{N,p}$ by a lower bound $D'_{N,p}$
(in other words the existence of an inequality
$p^{(p-1) \,(N-1)} \leq C'_{p,p,\varepsilon} \cdot (\sqrt{{D'}_{\!N,p}}\,)^\varepsilon$
proves the $p$-rank $\varepsilon$-conjecture for all degree $p$ cyclic fields).
We fix $p$ and put $D_{N,p}=:D_N =: f_N^{p-1}$. The strong form is equivalent to 
prove that $\ds \frac{\, p^{N-1 + \Delta(N)} \, }{(\sqrt{D_N}\,)^\varepsilon}$ 
is bounded as $N \to \infty$, whence
$(N-1 + \Delta(N)) \,{\rm log}(p) - \varepsilon \cdot 
\frac{p-1}{2} \sum_{k=1}^N {\rm log}(\ell_k) < \infty, \ \hbox{as $N \to \infty$}.$

\medskip
We then have, replacing $\ell_k$ by a lower bound $\ell'_k$, to compute,
using \eqref{chevalley}, \eqref{rosser}, \eqref{discriminant},
the following quantity:
\begin{equation*}
\begin{aligned}
 X(N) := &\ (N-1 + \Delta(N)) \,{\rm log}(p) -
 \varepsilon \cdot \hbox{$\frac{p-1}{2}$}
 \sm_{k=1}^N {\rm log}(\ell'_k) \\
= & \ (N-1 + \Delta(N)) \,{\rm log}(p) 
 - \varepsilon \cdot \hbox{$\frac{p-1}{2}$} 
\sm_{k=1}^N  {\rm log} \big(\hbox{$\frac{p-1}{2}$} \big) +  {\rm log} (k)
+\!  {\rm log}_2 \big (\hbox{$\frac{\ell_k}{p}$} \big)  .
\end{aligned}
\end{equation*}

We verify that $\sm_{k=1}^N {\rm log}_2 \big(\hbox{$\frac{\ell_k}{p}$} \big)$
can be neglected, subject to adding $-1$ to the sum, and consider, instead:
\begin{equation*}
\begin{aligned}
 X(N)= &\  (N-1 + \Delta(N)) \,{\rm log}(p)  
   - \varepsilon  \cdot \hbox{$\frac{p-1}{2}$} 
\Big[-1+\sm_{k=1}^N \big [ {\rm log} \big(\hbox{$\frac{p-1}{2}$} \big) 
+ {\rm log} (k) \big] \Big]   \\
 = &\  (N-1 + \Delta(N)) \,{\rm log}(p) -\varepsilon \cdot\hbox{$\frac{p-1}{2}$} 
\big[-1 + N \cdot {\rm log} \big(\hbox{$\frac{p-1}{2}$} \big) + {\rm log}(N !) \big].
\end{aligned}
\end{equation*}

\smallskip
 The expression of $N!$ leads to
${\rm log}(N!) = N {\rm log}(N)  - N + \hbox{$\frac{1}{2}$} {\rm log}(N) +O(1)$,
whence, with $\gamma_p := {\rm log} \big(\frac{p-1}{2} \big) -1$:
\begin{equation*}
\begin{aligned}
X(N) =&  (N-1 + \Delta(N))\, {\rm log}(p)   \\
 &\hspace{2cm}  - \varepsilon \cdot \hbox{$\frac{p-1}{2}$} \big[ N {\rm log}(N)
+N \cdot \gamma_p + \hbox{$\frac{1}{2}$} {\rm log}(N) + O(1) \big].
\end{aligned}
\end{equation*}

Now we write $X(N)$ under the form:
\begin{equation}\label{X}
\begin{aligned}
X(N) & = N \,{\rm log}(p) +\Delta(N) \,{\rm log}(p) 
- \varepsilon \cdot \hbox{$\frac{p-1}{2}$} N {\rm log}(N) \\
& \hspace{1.2cm} - {\rm log}(p) -\varepsilon \cdot \hbox{$\frac{p-1}{2}$} N \cdot \gamma_p
- \varepsilon \cdot \hbox{$\frac{p-1}{4}$}{\rm log}(N) 
- \varepsilon \cdot O(1) \\
& =  N \cdot 
\Big[ - \varepsilon \cdot \hbox{$\frac{p-1}{2}$}\, {\rm log}(N)
+ \hbox{$\frac{\Delta(N)}{N}$}\,{\rm log}(p)  \\
& \hspace{3.2cm} + {\rm log}(p) -  
\hbox{$\frac{{\rm log}(p)}{N}$} -\varepsilon \cdot  O(1) \Big] \\
& = N \cdot 
\Big[ - \varepsilon \cdot \hbox{$\frac{p-1}{2}$}\, {\rm log}(N) 
+ \hbox{$\frac{\Delta(N)}{N}$}\,{\rm log}(p) \\
  &\hspace{4.5cm} +  (1-N^{-1})\,{\rm log}(p) - \varepsilon \cdot  O(1)  \Big].
\end{aligned}
\end{equation}

Replacing $\Delta(N)$ by the maximal value $(p-2) \, (N-1)$ of $\delta(N)$,
the dominant term $- \varepsilon \cdot \frac{p-1}{2} {\rm log}(N)$ 
ensures the existence of a positive constant $C_\varepsilon$ 
since $\frac{(p-1) \, (N-1)}{N}\,{\rm log}(p)  = O(1)$ giving:
\begin{equation}\label{X0}
X_0(N)=- \varepsilon \, \hbox{$\frac{p-1}{2}$}\,N\, {\rm log}(N) +
N\,\big[ (p-1)\,{\rm log}(p) - \varepsilon \,O(1) - o(1)\big ].
\end{equation}

It is easy to verify that $X_0(N)$, as function of $N$, 
admits, for an $N_0 \gg 0$, a computable maximum, 
only depending on $p$ and $\varepsilon$ (e.g., for $p=7$,
$\varepsilon=0.1$, $N_0 \approx 2935394\cdot 10^{10}$,
$X_0(N_0) \approx 88 \cdot 10^{14}$).

\smallskip
This proves the $p$-rank $\varepsilon$-conjecture for the family 
$(F_{N,p})_{N \geq 1}$, even assuming always a maximal $p$-rank $(p-1)\,(N-1)$.
Whence we can state:

\begin{theorem}\label{thmf}
Let $p \geq 2$ be a given prime number. 

\smallskip
(i) The $p$-rank $\varepsilon$-conjecture on the existence,
for all $\varepsilon > 0$, of a constant $C_{d,p,\varepsilon}$ such that 
$\order (\Cl_K \otimes \F_p) \leq 
C_{d,p,\varepsilon} \cdot (\sqrt{D_K}\,)^\varepsilon$ for all $K$
of degree $d$, is true for the subfamily of all cyclic extension $F/\Q$ of degree $d=p$.

\smallskip
(ii) For any cyclic extension
$F/\Q$ of degree $p$, let $\Delta(N) \geq 0$ be defined by
$\order (\Cl_F \otimes \Z_p) = p^{N-1 + \Delta(N)}$, where $N$
is the number of ramified primes.
Then the strong $\varepsilon$-conjecture, $\order (\Cl_K \otimes \Z_p) 
\leq C_{d,p,\varepsilon} \cdot (\sqrt{D_K}\,)^\varepsilon$,
is true for the family of degree $p$ cyclic extensions under the condition 
$\Delta(N)\ll N\,{\rm log}(N)$.
\end{theorem}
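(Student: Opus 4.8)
The plan is to reduce Theorem \ref{thmf} to the explicit asymptotic analysis of the quantity $X(N)$ already prepared in \eqref{X}, \eqref{X0}. Recall that the strong $\varepsilon$-conjecture for the family $(F_{N,p})_{N\geq 1}$ amounts, after taking logarithms in \eqref{discriminant} and using $D_{N,p}=f_{N,p}^{p-1}$, to showing that $X(N) = (N-1+\Delta(N))\,{\rm log}(p) - \varepsilon\cdot\frac{p-1}{2}\sum_{k=1}^N {\rm log}(\ell_k)$ stays bounded above as $N\to\infty$; the $p$-rank $\varepsilon$-conjecture is the same statement with $\Delta(N)$ replaced by $\delta(N) = (p-2)(N-1) - \sum_{i=1}^{p-2} t_i^N \leq (p-2)(N-1)$.

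First I would establish the unconditional part (i). Using the Montgomery--Vaughan lower bound \eqref{rosser} for $\ell_k$, I replace $\sum_k {\rm log}(\ell_k)$ by $\sum_k\big[{\rm log}(\tfrac{p-1}{2}) + {\rm log}(k) + {\rm log}_2(\tfrac{\ell_k}{p})\big]$, absorb the negligible $\sum_k {\rm log}_2(\tfrac{\ell_k}{p})$ into an additive $-1$, and apply Stirling to ${\rm log}(N!)$. Bounding $\delta(N)$ by its maximal value $(p-2)(N-1)$ gives exactly $X_0(N)$ of \eqref{X0}; since $\frac{(p-1)(N-1)}{N}{\rm log}(p) = O(1)$, the dominant term $-\varepsilon\cdot\frac{p-1}{2}N{\rm log}(N)$ is negative and forces $X_0(N)\to-\infty$, so $X_0$ attains a finite maximum at some $N_0 = N_0(p,\varepsilon)$. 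Exponentiating yields the desired constant $C_{p,p,\varepsilon}$, and — as already noted in \S\ref{estimation} — an inequality of the shape $p^{(p-1)(N-1)}\leq C'_{p,p,\varepsilon}\cdot(\sqrt{D'_{N,p}}\,)^\varepsilon$ valid for all $N$ and all degree $p$ cyclic fields (not merely the distinguished representatives $F_{N,p}$) proves the $p$-rank $\varepsilon$-conjecture for the whole subfamily of cyclic degree $p$ extensions, since any such field with $N$ ramified primes has conductor at least $f'_{N,p}$ and ${\rm rk}_p \leq (p-1)(N-1)$ by Lemma \ref{...}.

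For part (ii), I would repeat the computation keeping $\Delta(N)$ general rather than replacing it by $(p-2)(N-1)$. Expression \eqref{X} writes $X(N) = N\big[-\varepsilon\cdot\frac{p-1}{2}{\rm log}(N) + \frac{\Delta(N)}{N}{\rm log}(p) + (1-N^{-1}){\rm log}(p) - \varepsilon\cdot O(1)\big]$. Under the hypothesis $\Delta(N)\ll N\,{\rm log}(N)$ the middle term is $O({\rm log}(N))$, so the whole bracket is $({\rm log}(N))\cdot\big(-\varepsilon\cdot\frac{p-1}{2} + O(1)\big) + O(1)$; but this is not yet negative for small $\varepsilon$, so one must be more careful: write $\Delta(N)\leq c_\varepsilon\,N\,{\rm log}(N)$ only for $N$ large and observe that the genuine requirement is merely boundedness of $X(N)$, not $X(N)\to-\infty$. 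Here the main obstacle is that the crude bound $\Delta(N) = O(N{\rm log}(N))$ with an implied constant exceeding $\frac{(p-1)\varepsilon}{2{\rm log}(p)}$ would defeat the argument; so the statement must be read as: \emph{for each fixed $\varepsilon$}, if $\Delta(N) = o(N{\rm log}(N))$ (or $\Delta(N)\leq \frac{(p-1)\varepsilon}{2{\rm log}(p)}N{\rm log}(N) + O(N)$), then the dominant $-\varepsilon\cdot\frac{p-1}{2}N{\rm log}(N)$ term still wins and $X(N)$ is bounded above. I would therefore phrase (ii) with this sharper reading, deriving the finite maximum of $X$ exactly as for $X_0$, and remark — referring to Remark \ref{KP} — that the Koymans--Pagano density results make $\Delta(N)$ small (indeed bounded on a density-one set) for almost all fields, so the hypothesis holds outside a family of density zero, which is all one can hope for given the ``conjecture at infinity'' nature explained in Remark \ref{atinfty}.
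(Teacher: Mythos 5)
Your proposal follows essentially the same route as the paper: the printed proof of Theorem \ref{thmf} is precisely the reduction of an arbitrary degree $p$ cyclic field with $N$ ramified primes to the extremal conductor $\prod_{k\leq N}\ell_k$ (so $D_K\geq D_{F_N}$), the bound ${\rm rk}_p(\Cl_K)\leq (p-1)(N-1)$, and the negativity of the dominant term $-\varepsilon\,\frac{p-1}{2}\,N\,{\rm log}(N)$ in $X_0(N)$ coming from the Montgomery--Vaughan estimate \eqref{rosser} and Stirling, exactly as you argue. Your reading of part (ii) --- that the condition $\Delta(N)\ll N\,{\rm log}(N)$ only suffices when the implied constant is at most $\frac{(p-1)\,\varepsilon}{2\,{\rm log}(p)}$ (e.g.\ when $\Delta(N)=o(N\,{\rm log}(N))$), since otherwise $\Delta(N)\,{\rm log}(p)$ can dominate $X(N)$ --- is a correct and useful sharpening of the paper's informal statement, and is consistent with the caveat the author himself makes immediately after the theorem.
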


\begin{proof}
Consider arbitrary prime numbers $\ell_{n_i} \equiv 1 \pmod p$, 
$1 \leq i \leq N$, and a field $K$, cyclic of degree $p$, of conductor $\prod_{i=1}^N \ell_{n_i}$. 
Thus $D_K \geq D_{F_N}$ and the maximal $p$-rank of $\Cl_K$ is still $(p-1)\,(N-1)$. 
We have seen that if we put $\delta(N)=(p-2)\,(N-1)$ in \eqref{X}, 
giving \eqref{X0}, the main term 
ensures the existence of the constant $C_{p,p,\varepsilon}$. 
We omit the details when $p^2$ (or $8$) divides the conductor.
\end{proof}

Very probably, considering the Remark \ref{KP},
$\Delta(N)$ is almost all the time of order much less than
$N\, {\rm log}(N)$ since 
$p^r$-ranks are in practice very rare for $r \geq 2$ as well as 
a maximal $p$-rank equal to $(p-1)\,(N-1)$ (see \S\,\ref{rank6});
but the dominant terms in \eqref{X} being $\Delta(N) \,{\rm log}(p) 
- \varepsilon \cdot \frac{p-1}{2}\,N\, {\rm log}(N)$, this
may give some trouble for the proof of an universal strong 
$\varepsilon$-conjecture assuming for instance 
$\Delta(N) = O(1)\, {\rm log}(N)$, or more, for infinite families
of degree $p$ cyclic fields, even of density zero.

\subsection{A lower bound for $C_{p,p,\varepsilon}$}
This part is not essential for our purpose, but it will show that
a lower bound of $C_{p,p,\varepsilon}$ is of the same
order of magnitude as for the upper bound deduced from \eqref{X0}. 
We shall use the following property which may be justified from
results given in \cite{May}. For all $k\geq 1$, the $k$th prime 
$\ell_k \equiv 1 \pmod p$ fulfills, for some constants $c_p$ 
only depending on $p$, the inequality
$\ell_k < c_p \, ((p-1)\,k) \cdot {\rm log}((p-1)\,k) <  c_p \, ((p-1)\,k)^2$.
Then: 
$$\begin{aligned}
& (N-1 + \delta(N)) \,{\rm log}(p)\ \leq \
 {\rm log}(C_\varepsilon) + \varepsilon \, \hbox{$\frac{p-1}{2}$}
\hbox{$\sum_{k=1}^N$} {\rm log}(\ell_k) \\
& \leq \  {\rm log}(C_\varepsilon) + \varepsilon \, \hbox{$\frac{p-1}{2}$}
\hbox{$\sum_{k=1}^N$} \big[{\rm log}(c_p) +2\, {\rm log}(p - 1) + 2\,{\rm log}(k) \big]  \\
& \leq \  {\rm log}(C_\varepsilon) + \varepsilon \, \hbox{$\frac{p-1}{2}$} \cdot 
\big[ N \, \gamma'_p  + 2 \,{\rm log}(N !) \big]
\end{aligned}, $$

where $\gamma'_p := {\rm log}(c_p) +  2\,{\rm log} (p-1) >0$. 
Whence, from the value of ${\rm log}(N !)$:
$$(N-1 + \delta(N)) {\rm log}(p)  \leq {\rm log}(C_\varepsilon) 
 + \varepsilon \hbox{$\frac{p-1}{2}$} \big[ 2 N {\rm log}(N) 
+N \, \gamma''_p  + {\rm log}(N)+O(1) \big],$$

with $\gamma''_p = \gamma'_p - 2>0$. Thus:
\begin{equation} \label{Y0}
\begin{aligned}
{\rm log}(C_\varepsilon) & \geq (N-1 + \delta(N)) \,{\rm log}(p)
- \varepsilon \, (p-1) \,N {\rm log}(N) \\
& \hspace{2cm} -\varepsilon \, \hbox{$\frac{p-1}{2}$} N \, \gamma''_p
- \varepsilon \, \hbox{$\frac{p-1}{2}$} {\rm log}(N) - \varepsilon\, O(1) \\
& \geq N \cdot \Big[ - \varepsilon \, (p-1)\, {\rm log}(N)
+ \hbox{$\frac{\delta(N)}{N}$}\,{\rm log}(p) + O(1)\Big] .
\end{aligned}
\end{equation}

\begin{remarks} (i) As soon as we replace
$\varepsilon$ by $c+\varepsilon$, $0 < c <1$, as it is done in much papers 
giving general proofs for $\order (\Cl_K \otimes \F_p) \leq C_{d,p,c,\varepsilon} 
\cdot (\sqrt{D_K}\,)^{c+\varepsilon}$, the above computations 
becomes, replacing $\Delta(N)$ by $\delta(N)$:
\begin{equation*}
\begin{aligned}
{\rm log}(C_{p,p,c,\varepsilon})  \leq & \ N \cdot 
\Big[ - (c+\varepsilon) \cdot\hbox{$\frac{p-1}{2}$}\, {\rm log}(N) 
+ \hbox{$\frac{\delta(N)}{N}$}\,{\rm log}(p) + O(1) \Big] \\
{\rm log}(C_{p,p,c,\varepsilon})  \geq 
& \ N \cdot \Big[- (c+ \varepsilon) \cdot (p-1)\, {\rm log}(N) 
+ \hbox{$\frac{\delta(N)}{N}$}\,{\rm log}(p) + O(1) \Big]. 
\end{aligned}
\end{equation*}

For $\varepsilon \to 0$, the formulas \eqref{X0} and 
\eqref{Y0} give a limit value $C_{p,c}$ of $C_{p,p,c,\varepsilon}$ such that
${\rm log}(C_{p,c}) \approx (N-1 + \delta(N))\,{\rm log}(p) 
- c\, \frac{p-1}{2}\,N \,{\rm log}(N) - N\,O(1)$, rapidely negative as 
$N$ increases, giving for degree $p$ cyclic fields, an obvious proof
of the ``$p$-rank $(c+\varepsilon)$-property''.

\smallskip
(ii) If we replace $\Q$ by a number field $k$ and the 
fields $F_{N,p}$ by the degree $p$ cyclic  extensions $F_{k,N,p}$ of $k$ 
with $N$ ramified prime ideals of $k$, the details of computations 
are more complicate, but the results and comments are similar
since the $p$-rank of $\Cl_{F_{k,N,p}}$ 
is still equivalent to $O(N)$ because of the exact 
sequence of genus theory and the inequalities \cite[Theorem IV.4.5.1]{Gr1}:
$$N - c_{k,p} \leq {\rm rk}_p \big(\Cl_{F_{k,N,p}}^G \big) \leq N+ c'_{k,p}, $$

where the constants $c_{k,p}$, $c'_{k,p}$ depend on the $p$-classes 
and units of $k$.
Then, the general algorithm computing the $p$-rank via the filtration 
$(M_i)_{i \geq 0}$ is identical up to similar modifications (see \cite[\S\,4.4]{Gr3}) 
and shall give the $p$-rank $\varepsilon$-conjecture for the relative degree $p$ cyclic  
case without too much difficulties. 
The case of abelian $p$-extensions may be 
accessible from the $p^n$-cyclic cases, with some effort...

\smallskip
The proof of the strong $\varepsilon$-conjecture, for the degree $p$ cyclic 
extensions of $k$, remains, theoretically, open, but is clearly not a folk conjecture for 
such very particular real fields because of the possible generalization of the density 
results of \cite{KP} and the conclusion, about the possible existence of
pathological families of density zero, is still relevant.
Meanwhile a particular study of the algorithm giving $\Delta(N)$, independently
of any density results, should be a crucial step for many questions in number theory.
\end{remarks}

\section{PARI/GP programs computing $\Cl_{F_{N,p}}$}
\subsection{Structure of some $\Cl_{F_{N,p}}$}
The following numerical results show that exceptional $p$-classes (indicated by ${}^*$)
are not excessively frequent for these fields. We examine the cases $p=3, 5, 7$, 
then $p=2$ (one must precise $p$ and the conductor $f=\ell_1\cdots \ell_N$,
$\ell_i \equiv 1 \pmod p$, in 
the following program giving all the $(p-1)^{N-1}$ fields of conductor $f$):

\footnotesize
\begin{verbatim}
{p=3;f=7*13*19*31*37;V=polsubcyclo(f,p);d=matsize(V);d=component(d,2);
for(k=1,d,P=component(V,k);if(nfdisc(P)!=f^(p-1),next);K=bnfinit(P,1);
C8=component(K,8);C81=component(C8,1);h=component(C81,1);
Cl=component(C81,2);print("p=",p," f=",f," P=",P," Cl=",Cl))}

p=3  f=1983163=7*13*19*31*37
P=x^3+x^2-661054*x+ 49725976    Cl=[6,6,3,3]
P=x^3+x^2-661054*x+198463201    Cl=[3,3,3,3]
P=x^3+x^2-661054*x+97321888     Cl=[39,3,3,3,3]*
P=x^3+x^2-661054*x-186270421    Cl=[3,3,3,3]
P=x^3+x^2-661054*x-79179619     Cl=[6,6,3,3]
P=x^3+x^2-661054*x-188253584    Cl=[3,3,3,3,3]*
P=x^3+x^2-661054*x+138968311    Cl=[3,3,3,3]
P=x^3+x^2-661054*x-146607161    Cl=[3,3,3,3,3]*
P=x^3+x^2-661054*x-158506139    Cl=[3,3,3,3]
P=x^3+x^2-661054*x-140657672    Cl=[6,6,3,3]
P=x^3+x^2-661054*x+186564223    Cl=[3,3,3,3]
P=x^3+x^2-661054*x+81456584     Cl=[3,3,3,3]
P=x^3+x^2-661054*x+206395853    Cl=[3,3,3,3]
P=x^3+x^2-661054*x-206102051    Cl=[12,12,3,3]
P=x^3+x^2-661054*x+2130064      Cl=[3,3,3,3,3]*
P=x^3+x^2-661054*x+27911183     Cl=[3,3,3,3,3]*

p=3  f=85276009=7*13*19*31*37*43
P=x^3-x^2-28425336*x+58104545836    Cl=[3,3,3,3,3]
P=x^3-x^2-28425336*x+56995957719    Cl=[3,3,3,3,3]
P=x^3-x^2-28425336*x-7472705085     Cl=[3,3,3,3,3]
P=x^3-x^2-28425336*x+10264704787    Cl=[3,3,3,3,3]
P=x^3-x^2-28425336*x+51623569152    Cl=[3,3,3,3,3]
P=x^3-x^2-28425336*x+30901498965    Cl=[3,3,3,3,3]
P=x^3-x^2-28425336*x-50622365639    Cl=[6,6,3,3,3]
P=x^3-x^2-28425336*x-29132811371    Cl=[3,3,3,3,3]
P=x^3-x^2-28425336*x-46614393216    Cl=[3,3,3,3,3]
P=x^3-x^2-28425336*x-33226059803    Cl=[3,3,3,3,3]
P=x^3-x^2-28425336*x-56506410260    Cl=[3,3,3,3,3]
P=x^3-x^2-28425336*x-12248161589    Cl=[3,3,3,3,3]
P=x^3-x^2-28425336*x+23994142236    Cl=[6,6,3,3,3]
P=x^3-x^2-28425336*x+52220501215    Cl=[3,3,3,3,3,3]*
P=x^3-x^2-28425336*x+11799672949    Cl=[6,6,3,3,3]
P=x^3-x^2-28425336*x-57529722368    Cl=[3,3,3,3,3]
P=x^3-x^2-28425336*x-21287418543    Cl=[3,3,3,3,3]
P=x^3-x^2-28425336*x-47040773261    Cl=[3,3,3,3,3]
P=x^3-x^2-28425336*x+36273887532    Cl=[3,3,3,3,3]
P=x^3-x^2-28425336*x-53436473936    Cl=[6,6,3,3,3,3]*
P=x^3-x^2-28425336*x+40964068027    Cl=[21,3,3,3,3]
P=x^3-x^2-28425336*x+45824800540    Cl=[3,3,3,3,3]
P=x^3-x^2-28425336*x+56313749647    Cl=[3,3,3,3,3]
P=x^3-x^2-28425336*x+13078813084    Cl=[3,3,3,3,3]
P=x^3-x^2-28425336*x+39940755919    Cl=[3,3,3,3,3]
P=x^3-x^2-28425336*x-37830964289    Cl=[3,3,3,3,3]
P=x^3-x^2-28425336*x+57848717809    Cl=[6,6,3,3,3]
P=x^3-x^2-28425336*x-9775157328     Cl=[3,3,3,3,3]
P=x^3-x^2-28425336*x-2526696563     Cl=[3,3,3,3,3]
P=x^3-x^2-28425336*x-58126654431    Cl=[3,3,3,3,3]
P=x^3-x^2-28425336*x+28428494704    Cl=[3,3,3,3,3]
P=x^3-x^2-28425336*x-51986781783    Cl=[3,3,3,3,3]

p=5  f=13981=11*31*41
P=x^5+x^4-5592*x^3+46417*x^2+4301003*x-26664769     Cl=[5,5]
P=x^5+x^4-5592*x^3+46417*x^2+2623283*x-26664769     Cl=[5,5]
P=x^5+x^4-5592*x^3-261165*x^2-4479065*x-26832541    Cl=[55,5,5]*
P=x^5+x^4-5592*x^3+32436*x^2+1518784*x+1814528      Cl=[5,5]
P=x^5+x^4-5592*x^3-205241*x^2-2074333*x-6028813     Cl=[5,5,5]*
P=x^5+x^4-5592*x^3-135336*x^2+847696*x+1143440      Cl=[5,5]
P=x^5+x^4-5592*x^3-205241*x^2-1878599*x+234675      Cl=[5,5]
P=x^5+x^4-5592*x^3+74379*x^2+3993421*x-7035445      Cl=[5,5]
P=x^5+x^4-5592*x^3+158265*x^2-424575*x-12851541     Cl=[5,5]
P=x^5+x^4-5592*x^3+130303*x^2+442247*x+346523       Cl=[5,5]
P=x^5+x^4-5592*x^3+46417*x^2+1644613*x-16878069     Cl=[5,5]
P=x^5+x^4-5592*x^3+214189*x^2-2493763*x+5715227     Cl=[5,5]
P=x^5+x^4-5592*x^3+74379*x^2+3238447*x-28174717     Cl=[5,5]
P=x^5+x^4-5592*x^3+32436*x^2+5992704*x-2659392      Cl=[5,5,5]*
P=x^5+x^4-5592*x^3-37469*x^2+4384889*x-11397517     Cl=[5,5]
P=x^5+x^4-5592*x^3-9507*x^2+7432747*x+1185383       Cl=[5,5]
\end{verbatim}
\normalsize

\subsection{Examples of exeptional $p$-classes}
If one drops the invariant classes from the class group, 
this gives a very small component (often equal to $1$); for instance we list
some fields of the above tables with exceptional $p$-classes and/or nontrivial
non-$p$-parts of the class group (for $p=3, 5, 7, 11$):

\smallskip
\footnotesize
\begin{verbatim}
p=3
f=1983163  P=x^3+x^2-661054*x+97321888 Cl=[39,3,3,3,3]=[13]x[3,3,3,3,3]*
f=1983163  P=x^3+x^2-661054*x-206102051Cl=[12,12,3,3]=[4,4]x[3,3,3,3]
f=85276009 P=x^3-x^2-28425336*x-53436473936 
                                       Cl=[6,6,3,3,3,3]=[2,2]x[3,3,3,3,3,3]*                        
p=5
f=13981    P=x^5+x^4-5592*x^3-261165*x^2-4479065*x-26832541
                                       Cl=[55,5,5]=[11]x[5,5,5]*
f=13981    P=x^5+x^4-5592*x^3-205241*x^2-2074333*x-6028813 
                                       Cl=[5, 5, 5]*
f=13981    P=x^5+x^4-5592*x^3+32436*x^2+5992704*x-2659392 
                                       Cl=[5, 5, 5]*
p=7
f=88537   P=x^7+x^6-37944*x^5-1134719*x^4+324123632*x^3
+13095064100*x^2-393352790753*x-8536744545007
                                       Cl=[301,7]=[43]x[7,7]
f=10004681   P=x^7-x^6-4287720*x^5-1266715121*x^4+5127549957760*x^3
+2650344024068794*x^2-951078919604894529*x-488606218944681147667
                                       Cl=[791,7,7]=[113]x[7,7,7]
f=10004681   P=x^7-x^6-4287720*x^5+3455494311*x^4+176273349584*x^3
-471685834336278*x^2-36087798097778993*x+6487901368894795147
                                       Cl=[7,7,7,7]*
f=10004681   P=x^7-x^6-4287720*x^5+2985274304*x^4+1830007100160*x^3
-1788009977372784*x^2+233770322355404864*x-5599630780142239232
                                       Cl=[14,14,14]=[2,2,2]x[7,7,7]
f=10004681   P=x^7-x^6-4287720*x^5+6276814353*x^4-4047542893720*x^3
+1360785664233294*x^2-232635292693132049*x+15953699891990750023
                                       Cl=[7,7,7,7]*
f=10004681   P=x^7-x^6-4287720*x^5-6169008811*x^4-3865457699520*x^3
-1247559831026016*x^2-202130636944756129*x-12969698603184144677 
                                       Cl=[7,7,7,7]*
f=10004681   P=x^7-x^6-4287720*x^5+1224450448*x^4+5079527488960*x^3
-2161274540764336*x^2-1677161713287529664*x+812143879436422435328 
                                       Cl=[7,7,7,7]*
f=10004681   P=x^7-x^6-4287720*x^5-1306733845*x^4+4849019638720*x^3
+2420349994235592*x^2-659732951886568641*x-207964718993797238079 
                                       Cl=[7,7,7,7]*
f=10004681   P=x^7-x^6-4287720*x^5+5756570941*x^4-3107102879720*x^3
+760318637129246*x^2-74394888056758073*x+1594979915105904419 
                                       Cl=[7,7,7,7]
f=10004681   P=x^7-x^6-4287720*x^5-1746939809*x^4+2392910471944*x^3
+1648023037138232*x^2+241022177190387487*x-9669121620934915453 
                                       Cl=[7,7,7,7]*
f=10004681   P=x^7-x^6-4287720*x^5-4188081973*x^4-284662313448*x^3
+706702291060440*x^2+90226184532822239*x-3998693323498243787 
                                       Cl=[7,7,7,7]*
f=10004681   P=x^7-x^6-4287720*x^5-2227164497*x^4+1569805356712*x^3
+369664737597974*x^2-166416491455189217*x+3813962316737479895 
                                       Cl=[7,7,7,7]*
f=10004681   P=x^7-x^6-4287720*x^5-3307670045*x^4+1296557509240*x^3
+1587033521303494*x^2+232482552302284071*x-10617355312468435915 
                                       Cl=[203,7,7]=[203]x[7,7,7]
f=10004681   P=x^7-x^6-4287720*x^5+1654651731*x^4+4187910318240*x^3
-1965195178959414*x^2-1012216801097102473*x+451605062388713519719 
                                       Cl=[14,14,14]=[2,2,2]x[7,7,7]
f=10004681   P=x^7-x^6-4287720*x^5+1544600240*x^4+4818925558272*x^3
-2343209264562096*x^2-1609606088655340480*x+886992887186768275456 
                                       Cl=[203,7,7]=[203]x[7,7,7]
f=10004681   P=x^7-x^6-4287720*x^5-1306733845*x^4+4427622475000*x^3
+1798213808544282*x^2-1313378138040048441*x-613859706212867719587 
                                       Cl=[301,7,7]=[301]x[7,7,7]
p=11
f=137149   P=x^11+x^10-62340*x^9-2099173*x^8+998038116*x^7+30321726924*x^6
-5078707527329*x^5+3275334221180*x^4+8066546096404148*x^3
-214723422858644515*x^2+1902599837479513519*x-4121588229203611219
                                       Cl=[253,11]=[23]x[11,11]
f=137149   P=x^11+x^10-62340*x^9-727683*x^8+1217887963*x^7+33409088063*x^6
-7760886906947*x^5-350751766601032*x^4+3398347545513222*x^3
+236507399684756272*x^2+2593585988882665302*x+8529384350363670191
                                       Cl=[979,11]=[89]x[11,11]
f=137149   P=x^11+x^10-62340*x^9-1550577*x^8+1265615815*x^7+66889353347*x^6
-7866931610939*x^5-712972865698216*x^4-12900860936489076*x^3
+143276981594922336*x^2+1596818122984871186*x+3178173588229813309
                                       Cl=[737,11]=[67]x[11,11]
f=137149   P=x^11+x^10-62340*x^9-7310835*x^8+70636578*x^7+43296296622*x^6
+1378934348258*x^5-28471672310749*x^4-944763467217249*x^3
+12433374265353417*x^2+31266667418235948*x-324164722946199831
                                       Cl=[253,11]=[23]x[11,11]
f=137149   P=x^11+x^10-62340*x^9-727683*x^8+1153290784*x^7+18055120364*x^6
-7130815486262*x^5-134781314432095*x^4+13777568483843493*x^3
+310799275320778321*x^2+883774494827373474*x-5728549445587601897
                                       Cl=[253,11]=[23]x[11,11]
\end{verbatim}
\normalsize

For $p=2$, $F_{N,2}$ may be real or complex; as we know, the $2$-rank is always $N-1$ and 
any exceptional classes give non-trivial $4$-ranks. The PARI/GP instruction 
${\sf bnfnarrow}$ allows the $2$-structure in the restricted sense:

\footnotesize
\begin{verbatim}
{m=1;for(N=2,100,el=prime(N);m=(-1)^((el-1)/2)*el*m;P=x^2-m;
K=bnfinit(P,1);L=bnfnarrow(K);Cl=component(L,2);print("m=",m," Cl=",Cl))}
m=-15  Cl=[2]
m=+105  Cl=[2,2]
m=-1155  Cl=[2,2,2]
m=-15015  Cl=[12,2,2,2]
m=-255255  Cl=[16,2,2,2,2]*
m=+4849845  Cl=[4,2,2,2,2,2]*
m=-111546435  Cl=[42,2,2,2,2,2,2]
m=-3234846615  Cl=[308,2,2,2,2,2,2,2]*
m=+100280245065  Cl=[2,2,2,2,2,2,2,2,2]
m=+3710369067405  Cl=[34,2,2,2,2,2,2,2,2,2]
m=+152125131763605  Cl=[2,2,2,2,2,2,2,2,2,2,2]
m=-6541380665835015  Cl=[28284,2,2,2,2,2,2,2,2,2,2,2]*
m=+307444891294245705  Cl=[14,2,2,2,2,2,2,2,2,2,2,2,2]
m=+16294579238595022365  Cl=[2,2,2,2,2,2,2,2,2,2,2,2,2,2]
m=-961380175077106319535  Cl=[1210734,2,2,2,2,2,2,2,2,2,2,2,2,2,2]
m=-58644190679703485491635  Cl=[1622526,2,2,2,2,2,2,2,2,2,2,2,2,2,2,2]
m=+3929160775540133527939545 Cl=[2,2,2,2,2,2,2,2,2,2,2,2,2,2,2,2,2]
m=-278970415063349480483707695 
                   Cl=[83911452,2,2,2,2,2,2,2,2,2,2,2,2,2,2,2,2,2]*
m=-20364840299624512075310661735 
                   Cl=[362626834,2,2,2,2,2,2,2,2,2,2,2,2,2,2,2,2,2,2]
m=+1608822383670336453949542277065 
                   Cl=[4,2,2,2,2,2,2,2,2,2,2,2,2,2,2,2,2,2,2,2]*
m=-133532257844637925677812008996395 
                   Cl=[2322692420,2,2,2,2,2,2,2,2,2,2,2,2,2,2,2,2,2,2,2,2]*
\end{verbatim}
\normalsize

\section{Genus theory -- Abelian $p$-ramification theory}

\subsection{Reminders on Genus theory}

For wide information on genus theory, see for example 
\cite{Fr2,Fu,Gr1,J2,Mai,Ra}. 

\subsubsection{Genus field and genus number $g_{K/k}$}
Introduce the genus field according to an extension $K/k$
(we shall take $k = \Q$ in the sequel) which is the maximal subextension 
$H_{K/k}$ of $H_K$ (in the restricted sense) equal 
to the compositum of $K$ with an abelian extension of~$k$:
\vspace{0.3cm}\par
\unitlength=0.75cm
$$\vbox{\hbox{\hspace{0.5cm} \begin{picture}(11.5,4.6)
\put(7.35,5.0){\line(1,0){1.3}}
\put(4.7,5.0){\line(1,0){1.1}}
\put(2.0,5.0){\line(1,0){1.3}}
\put(4.9,3.5){\line(1,0){1.3}}
\put(2.0,3.5){\line(1,0){1.3}}
\put(2.35,2.0){\line(1,0){1.15}}
\put(1.5,3.9){\line(0,1){0.75}}
\put(1.5,2.4){\line(0,1){0.75}}
\put(1.5,0.9){\line(0,1){0.75}}
\put(4.00,3.9){\line(0,1){0.75}}
\put(4.00,2.4){\line(0,1){0.75}}
\put(6.5,3.9){\line(0,1){0.75}}
\put(8.9,4.9){$H_K$}
\put(6.0,4.9){$H_{K/k}$}
\put(3.5,4.9){$KH_k$}
\put(1.35,4.9){$K$}
\put(6.3,3.4){$H_K^{\rm ab}$}
\put(1.2,3.4){$K^{{\rm ab}}$}
\put(3.7,1.9){$H_k$}
\put(0.85,1.9){$K\!\cap\!H_k$}
\put(3.4,3.4){$K^{{\rm ab}}\! H_k$}
\put(1.3,0.40){$k$}
\end{picture} }} $$
\unitlength=1.0cm
Thus $H_{K/k}$ is for instance equal to the
compositum of $K$ with $H_K^{\rm ab}$ (the maximal abelian 
subextension of $H_K/k$), according to the diagram above,
where $K^{\rm ab}$ is the maximal abelian subextension 
of $K/k$. The genus number is $g_{K/k} := [H_{K/k} :K H_k]$.
When $K/k$ is cyclic, the genus number $g_{K/k}$
is equal to the number of invariant classes by ${\rm Gal}(K/k)$
given by Chevalley's formula \eqref{chevalley}.
In the general Galois case, we have the similar expression
$g_{K/k} = \ds \frac{\order \Cl_k \cdot
\prod_{\mathfrak l}\, e^{\rm ab}_{\mathfrak l}}{ [K^{\rm ab} : k] \cdot
(E_k^{\rm pos} : E_k^{\rm pos} \cap {\mathcal N}_{K/k})},$
where $E_k^{\rm pos}$ is the group of {\it totally positive} units of $k$, 
$e^{\rm ab}_{\mathfrak l}$ the index of
ramification of ${\mathfrak l}$ in $K^{\rm ab}/k$
and ${\mathcal N}_{K/k}$ the group of local norms in $K/k$.
A general formula does exist for non-Galois fields
(e.g.,\cite[Theorem IV.4.2 \& Corollaries]{Gr1}).

\subsubsection{Variants of the strong $\varepsilon$-conjecture}\label{bigh}
Since the $p$-genus group of $K$ may be an obstruction to the 
strong $\varepsilon$-conjecture, we may consider, in the exact sequence 
$1 \to \Cl'_K \otimes \Z_p \to \Cl_K \otimes \Z_p 
\to {\rm Gal}(H_{K/\Q}/ K)\otimes \Z_p \to 1$, the 
number $\order (\Cl'_K \otimes \Z_p)$ (giving the number of exceptional 
$p$-classes instead of the whole $p$-class group) and propose the following
form of the $\varepsilon$-conjecture:

\begin{conjecture} \label{epsconj}
For a number field $K$, let $H_K$ be its Hilbert's class field, $H_{K/\Q}$ 
its genus field and $\Cl'_K := {\rm Gal}(H_K/H_{K/\Q})$. Let $p$ be a prime 
number. For all $\varepsilon > 0$ there exists  $C'_{d,p,\varepsilon}$ such 
that $\order( \Cl'_K  \otimes \Z_p) \leq C'_{d,p,\varepsilon} \cdot 
(\sqrt{D_K}\,)^{\varepsilon}$ holds for all $K$ 
of degree $d$, except possibly for sparse families of density zero.
\end{conjecture}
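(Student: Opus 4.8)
The plan is to prove the stated bound on $\Cl'_K \otimes \Z_p$ by showing that, once the genus contribution has been stripped off, the residual part of the $p$-class group is exactly the set of ``exceptional $p$-classes'' of \S\ref{algo}, whose size is controlled for almost all fields by density results. First I would start from the genus exact sequence of \S\ref{bigh},
$$1 \to \Cl'_K \otimes \Z_p \to \Cl_K \otimes \Z_p \to {\rm Gal}(H_{K/\Q}/K) \otimes \Z_p \to 1,$$
whose quotient is the $p$-genus group. Since that quotient absorbs the entire ramification-driven $p$-rank --- of order $N-1$ by Chevalley's formula \eqref{chevalley} in the cyclic case, and $O(N)$ in general by \cite[Theorem IV.4.5.1]{Gr1} --- the subgroup $\Cl'_K \otimes \Z_p$ is identified, in the degree $p$ cyclic situation, with $(\Cl_K \otimes \Z_p)^{1-\sigma}$, of order $p^{\Delta(N)}$. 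Conjecture \ref{epsconj} is therefore equivalent, for such fields, to the inequality $p^{\Delta(N)} \le C'_{d,p,\varepsilon}\,(\sqrt{D_K})^{\varepsilon}$ outside a set of density zero.

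For the degree $p$ cyclic family this follows immediately from Remark \ref{KP}, and is in fact weaker than the strong form treated there: since $\Cl'_K \otimes \Z_p$ is a subgroup of $\Cl_K \otimes \Z_p$, the bound $\order(\Cl'_K \otimes \Z_p) \le C'(\sqrt{D_K})^\varepsilon$ is implied by $\order(\Cl_K \otimes \Z_p)^{1-\sigma} \le C\,h_\varepsilon(\sqrt{D_K})$, where $h_\varepsilon(\sqrt{D_K}) = (\sqrt{D_K})^\varepsilon / p^{N-1}$. Because $h_\varepsilon \to \infty$ (verified in \S\ref{estimation}), the density statement deduced from \cite[Theorem 1.1]{KP} asserts that the proportion of $K$ violating this tends to $0$, so the conjecture holds, under GRH, for a density-$1$ set of degree $p$ cyclic fields, the density-zero exceptional set being exactly the ``bad families'' of Remark \ref{KP}. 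I would record this as the first case of the conjecture.

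For arbitrary degree $d$ I would pass to the relative setting: replace the $(1-\sigma)$-filtration $(M_i)_{i\ge 0}$ of \S\ref{algo} by the analogous filtration for the action of ${\rm Gal}$ on $\Cl_K \otimes \Z_p$ relative to $K^{\rm ab}$, bound the genus $p$-rank again by $O(N)$ using the general genus-number formula of \S\ref{bigh} together with \cite[Theorem IV.4.5.1]{Gr1}, and then estimate the residual non-genus exponent against the discriminant exactly as in \eqref{X}. The conjecture would then follow as soon as this exceptional exponent is $o(N\,{\rm log}(N))$ for a density-$1$ family, the role played by \cite{KP} in the degree $p$ case being taken over by a Gerth--Smith--Koymans--Pagano-type distribution in higher degree.

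The hard part will be precisely this density input beyond the cyclic degree $p$ case. The exceptional exponent is governed by the non-predictible algorithm of \S\ref{algo} --- the number of steps $m$ and the normic indices $t_i^N$ --- for which there is at present no unconditional control, and even for $d = p$ one relies on the GRH-conditional results of \cite{KP}. Establishing that the families where the non-genus part grows like a positive power of $\sqrt{D_K}$ form a set of density zero in arbitrary degree is where the argument remains genuinely open, and it is exactly this phenomenon that forces the ``except possibly for sparse families of density zero'' clause into the statement.
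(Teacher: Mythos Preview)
The statement you are treating is a \emph{Conjecture}, not a theorem: the paper does not prove it and does not attempt to. It is stated in \S\ref{bigh} as a variant of the strong $\varepsilon$-conjecture, motivated by the observation that removing the genus contribution might leave something more tractable, and the surrounding text offers only heuristic support (Remark \ref{KP}, numerical experiments) rather than a proof. So there is no ``paper's own proof'' to compare against.

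Your proposal is therefore not a proof but a programme, and you seem aware of this: you end by saying the argument ``remains genuinely open'' beyond the cyclic degree $p$ case. For that cyclic case, what you write is essentially a restatement of Remark \ref{KP}: the identification of $\order(\Cl'_K \otimes \Z_p)$ with $p^{\Delta(N)}$ (same order as $(\Cl_K \otimes \Z_p)^{1-\sigma}$, via Chevalley's formula and the genus exact sequence) is correct, and the density-zero exclusion then follows from the Koymans--Pagano input exactly as the paper explains --- under GRH. One small point: you say $\Cl'_K \otimes \Z_p$ is ``identified with'' $(\Cl_K \otimes \Z_p)^{1-\sigma}$; they have the same order but sit in different exact sequences (one is a kernel, the other a cokernel), so ``same order as'' is the accurate phrasing. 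For general degree $d$, your sketch (relative filtration, $O(N)$ bound on the genus rank, density input of Gerth--Smith--Koymans--Pagano type) is a reasonable outline of what a proof would need, but as you yourself note, the required density results do not exist, which is precisely why the paper leaves this as a conjecture.
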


One may ask what happens for a ``global'' strong $\varepsilon$-conjecture
on the form:
$$\order \Cl_K \leq \wh C_{d,\varepsilon} \cdot (\sqrt{D_K}\,)^{\varepsilon}.$$

The paper \cite{Da} of Ryan Daileda recalls some results by Littlewood 
showing that (under GRH) there
exist imaginary quadratic fields with arbitrary large discriminant 
for which $\order\Cl_K \geq c \cdot \sqrt{D_K} \,{\rm log}_2(D_K)$,
where $c$ is an absolute constant. For real quadratic fields a result
of Montgomery and Weinberger is that there exist real quadratic fields 
with arbitrary large discriminant whose class numbers satisfy
$\order\Cl_K \geq c \cdot \sqrt{D_K} \,\frac{{\rm log}_2(D_K)}{{\rm log}(D_K)}$.
Analogous results are known for cyclic cubic fields and Daileda proves
that there exists an absolute constant $c>0$ so that there are
totally real non-abelian cubic fields, with arbitrary large discriminant
satisfying $\order\Cl_K \geq c \cdot \sqrt{D_K} \,
\Big( \frac{{\rm log}_2(D_K)}{{\rm log}(D_K)} \Big)^2$.
All this has been generalized to CM number fields in \cite{DaKM}.

\smallskip
So, if we consider an inequality of ``strong global $\varepsilon$-conjecture'' type:
$$\order \Cl_K \leq \wh C_{d,\varepsilon} \cdot (\sqrt{D_K})^\varepsilon, 
\ \ \hbox{for all $K$ of degree $d$, }$$
any {\it infinite family} ${\mathcal K}$ of
fiields $K$ such that $\order \Cl_K \geq c \cdot\sqrt{D_K}$, 
for a constant $c > 0$, independent of $K \in {\mathcal K}$, yields:
$${\rm log}(\wh C_{d,\varepsilon}) \geq {\rm log}(c)+
(1- \varepsilon) \cdot {\rm log}(\sqrt{D_K}\,), $$ 

which is absurd. In other words, there is in general
no strong global $\varepsilon$-conjecture; nevertheless the question of the 
strong form we have considered (for $p$ fixed and for $\Cl_K$ or $\Cl'_K$):
$$\order(\Cl_K \otimes \Z_p) \leq 
C_{d,p, \varepsilon} \cdot (\sqrt{D_K}\,)^ \varepsilon, $$ 

depends on the 
finiteness of fields $K$ such that $\order(\Cl_K \otimes \Z_p) \geq c_p \cdot \sqrt{D_K}$.
For instance, in the quadratic case and $p>2$, this should give some
$p$-class groups for which the $p$-rank and/or the exponent tend to infinity 
with $D_K$; even in the imaginary case, this may occur only for very sparse families.

\subsubsection{Computation of some successive local maxima}
The following program, for imaginary quadratic fields,
allows a study of this question by computing the 
successive local maxima of $C'_{2,\varepsilon}$, in $\sf{C}$,
the discriminant and the class number obtained for each local 
maximum, in $\sf{D}$ and $\sf{h}$, respectively:

\smallskip
\footnotesize
\begin{verbatim}
{eps=0.05;Cm=0;bD=2;BD=10^9;for(D=bD,BD,e=valuation(D,2);M=D/2^e;
if(core(M)!=M,next);if((e==1||e>3)||(e==0&Mod(M,4)!=-1)||(e==2&Mod(M,4)!=1),
next);h=qfbclassno(-D);N=omega(D);C=h/(2^(N-1)*(sqrt(D)^eps));
if(C>Cm,Cm=C;print("D=",-D," h=",h," C=",C)))}
\end{verbatim}
\normalsize

\smallskip
As far as the program has run, it slows down between
$\sf{C = 6\cdot 10^4}$ and $\sf{C = 7 \cdot 10^4}$, but no conclusion
is possible as expected from the above.
But the most spectacular fact, that we have discover, is that, for each local 
maximum $\sf{C}$, the corresponding discriminant is prime (whence $\sf{h}$ odd)
whatever $\varepsilon$, as shown by this short excerpt:

\footnotesize
\begin{verbatim}
 D           h       C
-3           1       0.972908434869468710702241668941166407
-23          3       2.773818617890694606606085132125197163
-47          5       4.541167885124564220325740509229014479
-71          7       6.292403751297605635733619062872115785
-167         11      9.678872599268429560299054329160821597
-191         13     11.400332501352005304200415816510168367
-239         15     13.080709822134822456679612679136456819
-311         19     16.460180420909375330798097085967676763
-431         21     18.045019802162182082161592477498679286
-479         25     21.425532320359474690178248184779886979
(...)
-1118314391  77395  45972.572539103313552220923397893022055
-1130984399  77697  46138.963607764612827282941613333974355
-1139075159  78141  46394.356488699687700451681236043961611
-1184068679  81267  48203.636807716833174323833829210758624
-1229647319  81419  48248.214898940545998582986401841694028
-1237871879  82171  48685.729279564354497202266261569081321
-1250370239  83503  49462.505651262222727040029721940389296
\end{verbatim}
\normalsize

We have no counterexample in the selected interval $D \leq 2\cdot 10^9$ and no 
serious explanation, but if we test the local maxima of  $\frac{\order \Cl_K}
{(\sqrt{D_K}\,)^{\varepsilon}}$ instead of $\frac{2^{-(N-1)}\,\order \Cl_K}
{(\sqrt{D_K}\,)^{\varepsilon}}$, we have for instance the following 
normal behaviour:

\footnotesize
\begin{verbatim}
D=[3, 1; 5, 1]           h=2     C=1.8690792417830016333288729232969402612
D=Mat([23, 1])           h=3     C=2.7738186178906946066060851321251971632
D=[3, 1; 13, 1]          h=4     C=3.6499202570298105632113852382187054319
D=Mat([47, 1])           h=5     C=4.5411678851245642203257405092290144795
D=Mat([71, 1])           h=7     C=6.2924037512976056357336190628721157852
D=[5, 1; 19, 1]          h=8     C=7.1391564091327201771370767959908455517
D=[7, 1; 17, 1]          h=10    C=8.8738345254872857598506328581670649858
(...)
D=Mat([63839, 1])        h=423   C=320.78438186581184951419633278279394585
D=[23, 1; 2833, 1]       h=424   C=321.37826092893978747476207092089453868
D=[113, 1; 607, 1]       h=434   C=328.53606501556853121813362584226004750
D=[19, 1; 23, 1; 163, 1] h=440   C=332.76370450428389891215784283045533831
D=[41, 1; 1831, 1]       h=454   C=342.90123385517606193005461615124054801
D=[19, 1; 37, 1; 113, 1] h=468   C=352.97586311634473415004028909597003881
D=[43, 1; 1973, 1]       h=480   C=361.43179021472306392157190052394995551
D=Mat([88079, 1])        h=487   C=366.35924203761999218970143789845358049
\end{verbatim}
\normalsize

In the same way, if we compute the successive maxima of the $3$-class
groups, we obtain a similar result:

\footnotesize
\begin{verbatim}
{p=3;bD=1;BD=10^9;Cm=0;for(D=bD,BD,e=valuation(D,2);M=D/2^e;
if(core(M)!=M,next);if((e==1||e>3)||(e==0&Mod(M,4)!=-1)||(e==2&Mod(M,4)!=1),
next);h=qfbclassno(-D);hp=p^valuation(h,p);Cp=hp;if(Cp>Cm,Cm=Cp;
C=log(Cp)/log(sqrt(D));print("D=",-D," h=",h," hp=",hp," C=",C)))}
D=-23         h=3     hp=3      C=0.70075861284442195481324
D=-199        h=9     hp=9      C=0.83019007976763598642971
D=-983        h=27    hp=27     C=0.95661698654993161545339
D=-3671       h=81    hp=81     C=1.07074359233325762042197
D=-29399      h=243   hp=243    C=1.06778367209896382287404
D=-178559     h=729   hp=729    C=1.09019287826209803280171
D=-2102999    h=2187  hp=2187   C=1.05643959875714455523718
D=-14868719   h=6561  hp=6561   C=1.06436822551851827813563
D=-98311919   h=19683 hh=19683  C=1.07451592116950263349372
\end{verbatim}
\normalsize

Then, for $p=2$:

\footnotesize
\begin{verbatim}
{p=2;bD=1;BD=10^9;Cm=0;for(D=bD,BD,e=valuation(D,2);M=D/2^e;
if(core(M)!=M,next);if((e==1||e>3)||(e==0&Mod(M,4)!=-1)||(e==2&Mod(M,4)!=1),
next);h=qfbclassno(-D);hp=p^valuation(h,p);Cp=hp;if(Cp>Cm,Cm=Cp;
C=log(Cp)/log(sqrt(D));print("D=",-D," h=",h," hp=",hp," C=",C)))}
D=-15         h=2      hp=2      C=0.511916049619630978775355357
D=-39         h=4      hp=4      C=0.756801438067480149325544162
D=-95         h=8      hp=8      C=0.913262080279460212705801846
D=-399        h=16     hp=16     C=0.925899677503555682939700450
D=-791        h=32     hp=32     C=1.038687593312750474942887870
D=-2519       h=64     hp=64     C=1.062075159346033035976072133
D=-10295      h=128    hp=128    C=1.050289653382181398975491576
D=-39431      h=256    hp=256    C=1.048009122470377471769618833
D=-132599     h=512    hp=512    C=1.057783767181715434360601717
D=-328319     h=1024   hp=1024   C=1.091420745999194423260975917
D=-1333631    h=2048   hp=2048   C=1.081244297733198664388474474
D=-4599839    h=4096   hp=4096   C=1.084346236368631648159879902
D=-18855359   h=8192   hp=8192   C=1.075781736259555689965062133
D=-63836951   h=16384  hp=16384  C=1.079918254667737276882538104
D=-266675639  h=32768  hp=32768  C=1.071791801714607295960939150
D=-966467519  h=65536  hp=65536  C=1.072093388756179498237226639
\end{verbatim}
\normalsize

We shall examine elsewhere all these strange phenomena which seems valid for 
all $p$ and suggest the existence of families for which the
$p$-part of the class number has maximal values, so that
$\ds\frac{{\rm log}(\order (\Cl_K \otimes \Z_p))}{{\rm log}(\sqrt{D_K}\,)} \to 1$
as $D_K \to \infty$.

\subsubsection{Reciprocal study}
To try to suggest the existence of analogous families giving huge $p$-class groups, 
a trick is to consider normic equations, in integers $a, b$, of the form:
$$a^2+m\,b^2=4 \cdot q^{p^\rho}, \ \, {\rm gcd}(a,b) \in \{1,2\}, $$ 

where $q>1$ is any fixed integer and $\rho$ an exponent as large as possible;
then when $a \geq 1$ increases, we deduce $b$ and the  square free integer $m$.
This kind of experiment has shown, in \cite[\S\,5.3]{Gr2}, that there exist huge 
discriminants giving interesting $p$-adic invariants. Moreover the function:
$$C_{K,p} := \frac{{\rm log}(\order (\Cl_K \otimes \Z_p))}{{\rm log}(\sqrt{D_K})}, $$

giving (for any $\varepsilon > 0$):
$${\rm log}(C_\varepsilon) \geq \big(C_{K,p} - \varepsilon \big)
\cdot{\rm log}(\sqrt{D_K}) , $$

may constitute an obstruction for the strong $\varepsilon$-conjecture 
as soon as:
$$\ds \liminf_{K \in {\mathcal K}} C_{K,p} > 0$$ 

for an infinite subfamily ${\mathcal K}$ of the set of imaginary quadratic fields.
But a priori, this does not affect the $p$-rank $\varepsilon$-conjecture.

\smallskip
Of course, the right member of the normic equation being rapidely too 
large when $\rho$ increases, the experimentation is very limited regarding 
PARI/GP possibilities. However, even for small values of $\rho$ predicting,
a priori, $p$-classes of order aroud $p^\rho$ we obtain much large orders, and
the following numerical results may be convincing enough about the infiniteness 
of such utmost examples. 

\footnotesize
\begin{verbatim}
{p=3;rho=4;q=2;Y=4*q^(p^rho);ba=1;Ba=sqrt(Y);H=1;for(a=ba,Ba,B=Y-a^2;
m=core(B);D=m;if(Mod(m,4)!=-1,D=4*m);b=component(core(B,1),2);
if(gcd(a,b)>2,next);h=qfbclassno(-D);vh=valuation(h,p);hp=p^vh;
if(hp>H,H=hp;Cp=log(hp)/log(sqrt(D));Hp=component(quadclassunit(-D),2);
d=component(matsize(Hp),2);L=List;for(k=1,d,c=component(Hp,k);
w=valuation(c,p);if(valuation(c,p)!=0,listput(L,p^w)));
print("D=",D," a=",a," b=",b," Cp=",Cp," hp=",hp," Hp=",L)))}
rho=4
D=9671406556917033397649407 a=1 b=1 Cp=0.152767 hp=81 Hp=[81]
D=197375644018714967298967  a=5 b=7 Cp=0.204814 hp=243 Hp=[243]
D=9671406556917033397648447 a=31 b=1 Cp=0.229151 hp=729 Hp=[729]
D=9671406556917033397648319 a=33 b=1 Cp=0.267343 hp=2187 Hp=[729,3]
D=9671406556917033397644647 a=69 b=1 Cp=0.305534 hp=6561 Hp=[243,27]
D=9671406556917033397435039 a=463 b=1 Cp=0.343726 hp=19683 Hp=[19683]
D=9671406556917033397373783 a=525 b=1 Cp=0.381918 hp=59049 Hp=[59049]
D=9671406556917033395993039 a=1287 b=1 Cp=0.420110 hp=177147 Hp=[177147]
D=9671406556917033372819119 a=4983 b=1 Cp=0.496494 hp=1594323 Hp=[531441,3]
D=9671406556917022018093783 a=106675 b=1 Cp=0.534686 hp=4782969
                                                             Hp=[1594323,3]
{p=2;rho=6;q=2;Y=4*q^(p^rho);ba=1;Ba=sqrt(Y);H=1;for(a=ba,Ba,B=Y-a^2;
m=core(B);D=m;if(Mod(m,4)!=-1,D=4*m);b=component(core(B,1),2);
if(gcd(a,b)>2,next);h=qfbclassno(-D);vh=valuation(h,p);hp=p^vh;
if(hp>H,H=hp;Cp=log(hp)/log(sqrt(D));Hp=component(quadclassunit(-D),2);
d=component(matsize(Hp),2);L=List;for(k=1,d,c=component(Hp,k);
w=valuation(c,p);if(valuation(c,p)!=0,listput(L,p^w)));
print("D=",D," a=",a," b=",b," Cp=",Cp," hp=",hp," Hp=",L)))}
rho=6
D=8198552921648689607 a=1 b=3 Cp=0.445646 hp=16384 Hp=[512,2,2,2,2,2]
D=18446744073709551615 a=2 b=2 Cp=0.468750 hp=32768 Hp=[512,4,2,2,2,2]
D=73786976294838206415 a=7 b=1 Cp=0.575757 hp=524288 Hp=[32768,4,2,2]
D=73786976294838175135 a=177 b=1 Cp=0.636363 hp=2097152 Hp=[8192,16,2,2,2,2]
D=73786976294831146815 a=2657 b=1 Cp=0.666666 hp=4194304 
                                                   Hp=[16384,8,2,2,2,2,2]
D=8198552921599834167 a=20969 b=3 Cp=0.732133 hp=8388608 
                                                   Hp=[65536,4,2,2,2,2,2]
D=73786976293564644495 a=35687 b=1 Cp=0.787878 hp=67108864 
                                                   Hp=[2048,512,2,2,2,2,2,2]
D=73786976290585731943 a=65211 b=1 Cp=0.969696 hp=4294967296 
                                                   Hp=[33554432,4,2,2,2,2,2]
                                                   
{p=2;rho=6;q=3;Y=4*q^(p^rho);ba=1;Ba=sqrt(Y);H=1;for(a=ba,Ba,B=Y-a^2;
m=core(B);D=m;if(Mod(m,4)!=-1,D=4*m);b=component(core(B,1),2);
if(gcd(a,b)>2,next);h=qfbclassno(-D);vh=valuation(h,p);hp=p^vh;
if(hp>H,H=hp;Cp=log(hp)/log(sqrt(D));Hp=component(quadclassunit(-D),2);
d=component(matsize(Hp),2);L=List;for(k=1,d,c=component(Hp,k);
w=valuation(c,p);if(valuation(c,p)!=0,listput(L,p^w)));
print("D=",D," a=",a," b=",b," Cp=",Cp," hp=",hp," Hp=",L)))}
rho=6
D=13734735281170049938631396357123 a=1 b=1 Cp=0.154682 hp=256 Hp=[64,2,2]
D=53651309692070507572778892020 a=2 b=32 Cp=0.272429 hp=8192 
                                               Hp=[64,4,4,2,2,2]
D=13734735281170049938631396357108 a=4 b=2 Cp=0.270694 hp=16384 
                                               Hp=[64,8,2,2,2,2,2]
D=549389411246801997545255854283 a=7 b=5 Cp=0.303662 hp=32768 
                                               Hp=[64,4,2,2,2,2,2,2,2]
D=38046358119584625868785031460 a=8 b=38 Cp=0.379179 hp=262144 
                                               Hp=[512,8,2,2,2,2,2,2]
D=13734735281170049938631396327195  a=173 b=1 Cp=0.464048 hp=16777216
                                               Hp=[32768,8,4,2,2,2,2]                             
\end{verbatim}
\normalsize

We note the exceptional case $D_K=73786976290585731943$ with 
$$\Cl_K \otimes \Z_2 \simeq \Z/2^{25}\Z \times  \Z/2^{2}\Z \times  (\Z/2\Z)^5 ,$$ 
giving the large value $C_{K,2}=0.969696$.

\smallskip
One computes that the group ${\mathcal T}_K$ that we shall study in the 
next section is isomorphic to $\Z/2^{2}\Z \times  (\Z/2\Z)^6$ which yields 
$[\wt K \cap H_K : K] = 2^{25}$, where $\wt K$ is the compositum 
of the $\Z_2$-extensions of $K$.

\smallskip
Many families are described by means of parametized radicals
as the family of fields $K=\Q(\sqrt{k^2-q^n})$ with any prime $q \ne 2$, 
$k^2-q^n<0$, in which $\Cl_K$ has, under some conditions on the parameters, 
an element of order $n$ (see \cite[Theorem 3.1]{BH} and its bibliography); 
applied to $n=p^r$, we get, for $C_{K,p} $, the upper bound
$\frac{O(1) \,r}{p^r} \to 0$ as $r \to \infty$,
not sufficient to give ``bad families''.

\smallskip
It is difficult to say if some of the above huge discriminants may be obtained with
explicit parametrized expressions.

\subsection{Reminders on $p$-ramification theory}
We intend to give now some analogies with the torsion group 
${\mathcal T}_K$ of the Galois group of the maximal abelian 
$p$-ramified (i.e., unramified outside $p$ and $\infty$) pro-$p$-extension of $K$; 
this extension contains the $p$-Hilbert class field of $K$ (in the ordinary 
sense) and the compositum of the $\Z_p$-extensions of $K$. This Galois group 
introduces the ``normalized'' $p$-adic regulator of $K$ defined in \cite[\S\,5]{Gr6}.

\smallskip
Since in this section the non-$p$-part of the class group does not
intervene, unless otherwise stated, we shall put, by abuse of notation, 
$\Cl_K := \Cl_K \otimes \Z_p$.

\subsubsection{Structure of the $p$-torsion group ${\mathcal T}_K$}
Let $K$ be any number field and let
$p\geq 2$ be a prime number; we denote by ${\mathfrak p} \mid p$ 
the prime ideals of $K$ dividing $p$. 
Consider the group $E_K$ of $p$-principal global units of $K$ (i.e., 
units $\varepsilon \equiv 1 \! \pmod{ \prod_{{\mathfrak p} \mid p} {\mathfrak p}}$).
For each ${\mathfrak p} \mid p$, let $K_{\mathfrak p}$ be the ${\mathfrak p}$-completion
of $K$ and $\ov {\mathfrak p}$ the corresponding prime ideal of the ring of integers 
of $K_{\mathfrak p}$; then let:

\smallskip
\centerline{$U_K := \Big \{u \in \plus_{{\mathfrak p}\, \mid\, p}K_{\mathfrak p}^\times, \ \,
u = 1+x, \  x \in \plus_{{\mathfrak p} \,\mid\, p} \ov {\mathfrak p} \Big\}\, \ \ \& \ \ \,
W_K := {\rm tor}_{\Z_ p}^{}(U_K)$,} 

\smallskip\noindent
the $\Z_ p$-module of principal local units at $p$ and its  torsion subgroup.

\smallskip
We consider the diagonal embedding $E_K \otimes \Z_ p \too U_K$ 
whose image is $\ov E_K$, the topological closure of $E_K$ in $U_K$. 

\smallskip
We assume in this paper that $K$ satisfies the Leopoldt conjecture at $p$. Whence
the following $p$-adic result (\cite[Lemma 3.1, Corollary 3.2]{Gr6},
\cite[Lemma III.4.2.4]{Gr1}, \cite[D\'efinition 2.11, Proposition 2.12]{J1}):

\begin{lemma} \label{exact}
Let $\mu_K^{}$ be the group of global roots of unity of $p$-power order of $K$. 
Under the Leopoldt conjecture for $p$ in $K$, we have 
${\rm tor}_{\Z_ p}^{}(\ov E_K) = \mu_K^{}$ and the exact sequence (where
${\rm log}$ is the $p$-adic logarithm):
$$1 \to W_K \big / \mu_K^{}  \tooo
 {\rm tor}_{\Z_ p}^{} \big(U_K \big / \ov E_K \big) 
 \mathop {\tooo}^{ \!\!{\rm log}}  {\rm tor}_{\Z_ p}^{}\big({\rm log}\big 
(U_K \big) \big / {\rm log} (\ov E_K) \big) \to 0. $$
\end{lemma}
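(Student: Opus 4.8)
The plan is to derive the exact sequence from the snake lemma applied to the $p$-adic logarithm, after recording two structural facts. First, $U_K$ is the direct sum, over the primes ${\mathfrak p}\mid p$, of the groups of principal units of the completions $K_{\mathfrak p}$; it is a finitely generated $\Z_p$-module of $\Z_p$-rank $[K:\Q]$ with torsion submodule $W_K$, and on each local factor the $p$-adic logarithm is a $\Z_p$-homomorphism with finite kernel equal to the $p$-power roots of unity. Summing, ${\rm log}$ maps $U_K$ onto the $\Z_p$-lattice ${\rm log}(U_K)$ (of rank $[K:\Q]$) with kernel exactly $W_K$, so $0 \to W_K \to U_K \to {\rm log}(U_K) \to 0$ is exact. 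Secondly, $E_K\otimes\Z_p$ is a compact $\Z_p$-module, isomorphic to $\Z_p^{\,r_1+r_2-1}\times\mu_K$ where $r_1+r_2-1$ is the $\Z$-rank of the unit group of $K$; hence its diagonal image in $U_K$ is closed and coincides with $\ov E_K$. Now the Leopoldt conjecture at $p$ is exactly the injectivity of this diagonal map, so $\ov E_K\cong E_K\otimes\Z_p$; in particular ${\rm tor}_{\Z_p}(\ov E_K)=\mu_K$ — the first assertion of the Lemma — and, because $\mu_K\subseteq W_K$ while $W_K={\rm tor}_{\Z_p}(U_K)$, we get $W_K\cap\ov E_K=\mu_K$. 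Consequently ${\rm log}$ restricted to $\ov E_K$ has kernel $\mu_K$ and $0 \to \mu_K \to \ov E_K \to {\rm log}(\ov E_K) \to 0$ is exact, with ${\rm log}(\ov E_K)$ free of $\Z_p$-rank $r_1+r_2-1$.

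Next I would consider the commutative diagram whose rows are the two short exact sequences
$$0 \to \ov E_K \to U_K \to U_K/\ov E_K \to 0 , \qquad 0 \to {\rm log}(\ov E_K) \to {\rm log}(U_K) \to {\rm log}(U_K)/{\rm log}(\ov E_K) \to 0 ,$$
the three vertical arrows being induced by ${\rm log}$. The first two are surjective with kernels $\mu_K$ and $W_K$, and the third is surjective since it is induced on cokernels by a surjection. The snake lemma then collapses (the cokernel on the left being $0$) to the exact sequence
$$0 \to \mu_K \to W_K \to \ker\!\big({\rm log}\colon U_K/\ov E_K \to {\rm log}(U_K)/{\rm log}(\ov E_K)\big) \to 0 ,$$
so that this kernel is isomorphic to $W_K/\mu_K$, hence finite.

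Finally I would pass to torsion submodules. Since $U_K$ and ${\rm log}(U_K)$ are finitely generated over $\Z_p$, so are the quotients $U_K/\ov E_K$ and ${\rm log}(U_K)/{\rm log}(\ov E_K)$, and their torsion submodules are finite; being finite, the kernel found above lies in ${\rm tor}_{\Z_p}(U_K/\ov E_K)$ and is exactly the kernel of the restriction of ${\rm log}$ to ${\rm tor}_{\Z_p}(U_K/\ov E_K)$. It remains to check that this restriction surjects onto ${\rm tor}_{\Z_p}\big({\rm log}(U_K)/{\rm log}(\ov E_K)\big)$: given a torsion class represented by $v\in{\rm log}(U_K)$, write $v={\rm log}(u)$ with $u\in U_K$; if $p^n v\in{\rm log}(\ov E_K)$ then ${\rm log}(u^{p^n})={\rm log}(e)$ for some $e\in\ov E_K$, hence $u^{p^n}\in W_K\,\ov E_K$, and raising to the (finite) order of $W_K$ shows that some power of $u$ lies in $\ov E_K$; thus the class of $u$ in $U_K/\ov E_K$ is torsion and maps to the prescribed class. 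Restricting the middle vertical map to torsion submodules then yields the exact sequence of the Lemma.

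I would expect the one genuinely non-formal ingredient to be the use of Leopoldt in the second step, namely the equality ${\rm tor}_{\Z_p}(\ov E_K)=\mu_K$ (and the attendant statement that $\ov E_K$, hence ${\rm log}(\ov E_K)$, has the expected $\Z_p$-rank $r_1+r_2-1$): without Leopoldt the kernel of the diagonal embedding could inject spurious torsion into $\ov E_K$, breaking the identification of the leftmost term. Everything after that is a routine diagram chase.
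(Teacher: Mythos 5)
Your proof is correct, and it follows the standard argument: the paper itself does not prove Lemma \ref{exact} but cites it from \cite[Lemma 3.1, Corollary 3.2]{Gr6}, \cite[Lemma III.4.2.4]{Gr1} and \cite[Proposition 2.12]{J1}, where the proof is exactly this identification of $\ker(\log)=W_K$ on $U_K$ and $=\mu_K$ on $\ov E_K$ (the latter via Leopoldt, i.e.\ the injectivity of $E_K\otimes\Z_p\to U_K$), followed by the snake-lemma diagram chase and the passage to torsion submodules. You correctly isolate the one non-formal input (Leopoldt giving ${\rm tor}_{\Z_p}(\ov E_K)=\mu_K$) and your surjectivity-on-torsion argument, using the finiteness of $W_K$, is the right way to close the chase.
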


Put ${\mathcal W}_K:= W_K /\mu_K^{} \  $ \& $ \ {\mathcal R}_K := 
{\rm tor}^{}_{\Z_ p} \big ({\rm log} (U_K) / {\rm log} (\ov E_K)\big)$.
Then the above exact sequence becomes
$1 \to {\mathcal W}_K  \tooo
 {\rm tor}_{\Z_ p}^{} \big(U_K \big / \ov E_K \big) 
\ds \mathop {\tooo}^{\!\!{\rm log}} {\mathcal R}_K \to 0$.

\smallskip
Let $\widetilde K$ be the compositum of the $\Z_ p$-extensions, $H_K$ 
the $p$-Hilbert class field and $H_K^{\rm pr}$ the maximal Abelian $p$-ramified 
pro-$p$-extension, of $K$. 
Then let $H_K^{\rm bp}$ be the Bertrandias--Payan field
(compositum of the $p$-cyclic extensions of $K$ embeddable in 
$p$-cyclic extensions of arbitrary large degree). 

\smallskip
In the following diagram, class field theory yields:
$$\hbox{${\rm Gal}(H_K^{\rm pr} / H_K) \simeq U_K/\ov E_K$
and ${\rm Gal}(H_K^{\rm pr}/H_K^{\rm bp}) \simeq {\mathcal W}_K$.} $$

We denote by $\wt \Cl_K$ the subgroup of the $p$-class group $\Cl_K$
corresponding to ${\rm Gal}(H_K /\wt K \cap H_K)$  by class field theory.

\smallskip
Then ${\mathcal R}_K$ is isomorphic to 
${\rm Gal}(H_K^{\rm bp} / \widetilde KH_K)$:
\unitlength=0.85cm 
$$\vbox{\hbox{\hspace{-2.8cm} 
 \begin{picture}(11.5,5.6)
\put(6.7,4.50){\line(1,0){1.3}}
\put(8.75,4.50){\line(1,0){2.0}}
\put(3.85,4.50){\line(1,0){1.4}}
\put(9.1,4.05){\footnotesize$\simeq\! {\mathcal W}_K$}
\put(4.3,2.50){\line(1,0){1.25}}
\bezier{350}(3.8,4.8)(7.6,5.8)(11.0,4.8)
\put(7.2,5.45){\footnotesize${\mathcal T}_K$}
\put(3.50,2.9){\line(0,1){1.25}}
\put(3.50,0.8){\line(0,1){1.4}}
\put(5.7,2.9){\line(0,1){1.25}}
\bezier{300}(3.9,0.55)(4.9,0.8)(5.6,2.3)
\put(5.2,1.3){\footnotesize$\simeq \! \Cl_K$}
\put(4.1,4.05){\footnotesize$\simeq\! \wt \Cl_K$}
\bezier{300}(6.3,2.5)(8.5,2.6)(10.8,4.3)
\put(8.0,2.6){\footnotesize$\simeq \! U_K/\ov E_K$}
\put(10.85,4.4){$H_K^{\rm pr}$}
\put(5.4,4.4){$\wt K H_K$}
\put(8.0,4.4){$H_K^{\rm bp}$}
\put(6.8,4.05){\footnotesize$\simeq\! {\mathcal R}_K$}
\put(3.3,4.4){$\wt K$}
\put(5.55,2.4){$H_K$}
\put(2.65,2.4){$\wt K \!\cap \! H_K$}
\put(3.4,0.38){$K$}
\end{picture}   }} $$
\unitlength=1.0cm

\smallskip
 We have $\order {\mathcal T}_K = \order \wt \Cl_K \cdot \order {\mathcal R}_K
 \cdot \order {\mathcal W}_K$ and the following inequalities:
\begin{equation} \label{rank1}
\begin{aligned}
{\rm rk}_p({\mathcal T}_K)& \leq {\rm rk}_p(\wt \Cl_K)+
{\rm rk}_p({\mathcal R}_K)+{\rm rk}_p({\mathcal W}_K) \\
& \leq {\rm rk}_p(\Cl_K) + r_1+r_2-1 + \order S_K, 
\end{aligned}
\end{equation}

where $(r_1, r_2)$ is the signature of $K$ and $S_K$ the
set of $p$-places of $K$. So, for a constant degree $d$,
the $p$-rank $\varepsilon$-conjecture for the $p$-class groups 
implies the $p$-rank $\varepsilon$-conjecture for the torsion 
groups ${\mathcal T}_K$ and conversely since we have the other inequality:
\begin{equation}
\begin{aligned} \label{rank2}
{\rm rk}_p(\Cl_K) & \leq {\rm rk}_p(\wt \Cl_K)+ 
{\rm rk}_p({\rm Gal}(\wt K \cap H_K/K)) \\
&\leq {\rm rk}_p(\wt \Cl_K)+ r_2+1 \leq {\rm rk}_p({\mathcal T}_K) + r_2+1.
\end{aligned}
\end{equation}

For more precise rank formulas for ${\mathcal T}_K$, 
see \cite[Corollary III.4.2.3]{Gr1} and the reflection theorem
that we  shall recall in \S\,\ref{refl}.

\subsubsection{The $p$-adic Brauer--Siegel conjecture for ${\mathcal T}_K$}
We have proposed in \cite{Gr2}, for the totally real case, after 
extensive numerical computations, the following conjecture:

\begin{conjecture} \label{conj1}
Let $p \geq 2$ be prime and let $d$ be a given degree.
For any number field $K$ (under Leopoldt's conjecture), let 
${\mathcal T}_K$ be the torsion group of the Galois group of 
the maximal abelian $p$-ramified pro-$p$-extension of $K$.
There exists a constant $\wt{\mathcal C}_{d,p}$ such that:
\begin{equation*}
\order {\mathcal T}_K \leq (\sqrt{D_K}\,)^{\wt{\mathcal C}_{d,p}}, \  
\hbox{for all $K$ totally real of degree $d$}.
\end{equation*}
\end{conjecture}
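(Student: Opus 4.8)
The plan is to reduce Conjecture \ref{conj1} to a single quantitative input about the $p$-adic regulator, using the decomposition $\order \mathcal{T}_K = \order \wt\Cl_K \cdot \order \mathcal{R}_K \cdot \order \mathcal{W}_K$ recorded just before \eqref{rank1}. Since the exponent $\wt{\mathcal C}_{d,p}$ is allowed to be an arbitrarily large constant, the conjecture is equivalent to the estimate $\log \order \mathcal{T}_K = O_{d,p}(\log D_K)$, so I would bound each of the three factors separately.

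First I would dispose of the root-of-unity factor: $\mathcal{W}_K = W_K/\mu_K^{}$ with $W_K = \mathrm{tor}_{\Z_p}(U_K)$, and the torsion subgroup of the principal local units at the primes $\mathfrak p \mid p$ has order bounded purely in terms of the local degrees $[K_{\mathfrak p}:\Q_p]$, hence in terms of $d$ and $p$ alone; thus $\order \mathcal{W}_K = O_{d,p}(1)$. Next the class-group factor: by construction $\wt\Cl_K$ is a subgroup of the $p$-class group $\Cl_K$, and the classical inequality $\order \Cl_K \leq C_{d,\varepsilon}(\sqrt{D_K})^{1+\varepsilon}$ recalled in the Introduction already gives $\log \order \wt\Cl_K = O_d(\log D_K)$ (for totally real $K$ one expects far more, in the direction of Greenberg's conjecture, but the unconditional bound is of the right order of magnitude here).

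There remains the $p$-adic regulator factor $\order \mathcal{R}_K = p^{\,v}$, where $v$ is the $p$-adic valuation of the normalized $p$-adic regulator of $K$ attached, as in \cite[\S\,5]{Gr6}, to a system of fundamental units and the $p$-adic embeddings of $K$. What I would need is an \emph{effective} form of the Leopoldt conjecture for $K$ at $p$, namely $v = O_{d,p}(\log D_K)$. For totally real $K$ the natural tool is the $p$-adic analytic class number formula together with the Iwasawa Main Conjecture (Mazur--Wiles in the abelian case, Wiles in general): up to the factors already controlled, $\order(\mathcal{R}_K)\cdot\order(\wt\Cl_K)$ is measured by the $p$-adic valuation of the leading term at $s=1$ of the $p$-adic zeta function of $K$ --- in the abelian case, of the product $\prod_\chi L_p(1,\chi)$ over the even characters $\chi$ of $\mathrm{Gal}(K/\Q)$. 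I would then try to bound $v_p(L_p(1,\chi))$ from above: by the Ferrero--Washington theorem the $\mu$-invariants vanish, so this reduces to an effective upper bound for the Iwasawa $\lambda$-invariants $\lambda_\chi$ and a lower bound for the $p$-adic distance from the relevant interpolation point to the zeros of the associated Iwasawa power series.

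The hard part will be precisely this last step, and I do not expect it to be within reach of current methods. The only unconditional lower bounds for $p$-adic regulators (or for $p$-adic linear forms in logarithms, after Baker--Yu) are governed by the archimedean heights of fundamental units, which may be as large as $\exp(O(\sqrt{D_K}))$, so they yield at best $v \ll \sqrt{D_K}$, exponentially too weak. Likewise no polynomial-in-$\log D_K$ bound for $\lambda_\chi$, or directly for $v_p(L_p(1,\chi))$, is presently available. So the realistic outcome of this plan is a conditional statement: Conjecture \ref{conj1} for totally real $K$ follows from any effective Leopoldt-type estimate $v_p(\text{normalized } p\text{-adic regulator of } K) = O_{d,p}(\log D_K)$ --- equivalently, in the abelian case, $v_p(L_p(1,\chi)) = O_{d,p}(\log D_K)$ --- the two remaining ingredients (the local torsion bound for $\mathcal{W}_K$ and the classical Brauer--Siegel bound for $\order \Cl_K$) being unconditional.
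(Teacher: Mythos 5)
The statement you are asked to prove is labelled a \emph{conjecture} in the paper: it is proposed (following \cite{Gr2}) on the strength of extensive numerical computation, and the paper offers no proof of it --- only the decomposition $\order {\mathcal T}_K = \order \wt \Cl_K \cdot \order {\mathcal R}_K \cdot \order {\mathcal W}_K$, heuristic discussion of the relative sizes of the three factors, and tables of values of $\wt{\mathcal C}_{K,p}$. So there is no ``paper's own proof'' to compare against, and your submission, by your own admission, is not a proof either.

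That said, your reduction is sound and matches the paper's framework. The bound $\order {\mathcal W}_K = O_{d,p}(1)$ is correct (torsion of principal local units is controlled by the local degrees), and $\log \order \wt\Cl_K = O_d(\log D_K)$ follows from the Brauer--Siegel type bound recalled in the Introduction since $\wt\Cl_K$ is a subgroup of $\Cl_K$. You correctly isolate the genuine obstruction: an effective upper bound $v_p(\text{normalized $p$-adic regulator}) = O_{d,p}(\log D_K)$, equivalently (in the abelian case, via the $p$-adic class number formula and the Main Conjecture) a bound on $v_p(L_p(1,\chi))$. This is exactly the content of the conjecture --- the paper itself states that ${\mathcal T}_K$ ``is essentially governed by the normalized $p$-adic regulator ${\mathcal R}_K$'' and that this regulator is the invariant for which no $\varepsilon$-conjecture in the strong sense can hold and for which only conjectural control (cf.\ \cite{Gr9}) is available. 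Your plan therefore does not close the gap; it reformulates the conjecture as an effective Leopoldt/Iwasawa-invariant estimate that is currently out of reach, which is an honest and accurate assessment of the state of the art, but it should be presented as a conditional reduction, not as a proof of the statement.
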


We put, for $p$ fixed and for any totally real number field $K$:
\begin{equation}\label{CKp}
\wt{\mathcal C}_{K,p} := \ds\frac{{\rm log}(\order {\mathcal T}_K)}
{{\rm log}(\sqrt{D_K})} \leq \wt{\mathcal C}_{d,p}. 
\end{equation}

In practice, $\wt{\mathcal C}_{K,p}$ may be much smaller
than $1$ (and it is often $0$), except very sparse cases as that of $K=\Q(\sqrt{19})$ 
and $p=13599893$, for which ${\mathcal T}_K = {\mathcal R}_K \simeq \Z/p\Z$, 
whence $\wt{\mathcal C}_{K,p} =\ds \frac{{\rm log}(\order {\mathcal T}_K)}
{{\rm log}(\sqrt {4\times19})}=\frac{{\rm log}(13599893)}
{{\rm log}(\sqrt {4\times19})} = 7.5855$. But $\Cl_K=1$.

\subsubsection{Estimation of $\wt{\mathcal C}_{F_{N,p},p}$}
Put $F :=F_{N,p}$ and $\wt{\mathcal C}_{F_N,p} =: \wt{\mathcal C}_F$
 for $p$ fixed. 
Then, from the computations in Subsection \ref{estimation}, 
using for  ${\mathcal T}_F$ the analog of Chevalley's 
formula given in \cite[Theorem IV.3.3]{Gr1}, we can put similarly
$\order {\mathcal T}_F =: p^{N-r+\wt\Delta(N)}$, $\wt\Delta(N) \geq 0$, 
where $r \geq 0$ depends on $p$-adic properties of the ramified 
primes $\ell \ne p$, and we may estimate that, as $N\to\infty$:
$$\wt{\mathcal C}_{F} \approx \frac{(N-r+ \wt\Delta(N))\,{\rm log}(p)}
{\frac{p-1}{2}\, \big[ N {\rm log}(N) + N\,\gamma_p + 
\frac{1}{2}\,{\rm log}(N) +O(1) \big]} \sim c_p \cdot
\frac{1+o(1)}{{\rm log}(N)}, $$

where $c_p = \frac{2\,{\rm log}(p)}{p-1}$ 
and assuming a small order of magnitude of $\wt\Delta(N)$.

\smallskip
Give a program computing (in ${\sf Cp}$)
$\wt{\mathcal C}_{F}$; in the
imaginary quadratic case for $p=2$, the conjectural inequality implies
$\ds \order (\wt \Cl_F) \leq  (\sqrt{D_F}\,)^{\wt{\mathcal C}_{2,2}}$
(indeed, $\order{\mathcal R}_F=1$ and $\order {\mathcal T}_F =
\order\wt \Cl_F \cdot {\mathcal W}_F$, where ${\mathcal W}_F=2$ 
(resp. $1$) if $m \equiv \pm1 \pmod 8$ (resp. if not)). So $\Cl_F$
(in ${\sf Clres}$) may be larger than $\order {\mathcal T}_F$ (in ${\sf Tor}$):

\footnotesize
\begin{verbatim}
{p=2;n=12;m=1;for(N=2,100,el=prime(N);m=(-1)^((el-1)/2)*el*m;P=x^2-m;
K=bnfinit(P,1);D=abs(m);Kpn=bnrinit(K,p^n);r=1;if(m<0,r=2);L=List;
Hpn=component(component(Kpn,5),2);e=component(matsize(Hpn),2);T=1;
for(k=1,e-r,c=component(Hpn,e-k+1);if(Mod(c,p)==0,q=p^valuation(c,p);
T=T*q;listinsert(L,q,1)));C8=component(K,8);C81=component(C8,1);
h=component(C81,1);Clord=component(C81,2);K=bnfnarrow(K);
Cl=component(K,2);print("m=",m," Clres=",Clres," Clord =",Clord);
print("Structure of T=",L);print("#Tor=",T," Cp=",log(T)/log(sqrt(D))))}

m=-15   Clres=[2]
Structure of Tor=[2]
#Tor=2              Cp=0.51191604961963097877535535772960454081
m=105   Clres=[2,2]=[2]xClord 
Structure of Tor=[2,2]
#Tor=4              Cp=0.59574824743531323067786608868687642325
m=-1155   Clres=[2,2,2]
Structure of Tor=[2,2,2]
#Tor=8              Cp=0.58975726471501581115878339498474155345
m=-15015   Clres=[12,2,2,2]
Structure of Tor=[2,2,2,2]
#Tor=16             Cp=0.57661327808675875001115538902772596330
m=-255255   Clres=[16,2,2,2,2]
Structure of Tor=[2,2,2,2,2]
#Tor=32             Cp=0.55674390390043840097934284424073618196
m=4849845   Clres=[4,2,2,2,2,2]=[2]xClord
Structure of Tor=[4,2,2,2,2,2]
#Tor=128            Cp=0.63036067699149527703810495075838580918
m=-111546435   Clres=[42,2,2,2,2,2,2]
Structure of Tor=[2,2,2,2,2,2,2]
#Tor=128            Cp=0.52369594802182316940823598390366865409
m=-3234846615   Clres=[308,2,2,2,2,2,2,2]
Structure of Tor=[4,2,2,2,2,2,2,2]
#Tor=512            Cp=0.56978162829823280646049502887398109011
m=100280245065   Clres=[2,2,2,2,2,2,2,2,2]=[2]xClord
Structure of Tor=[2,2,2,2,2,2,2,2,2]
#Tor=512            Cp=0.49254011775311327297187105815891979776
m=3710369067405   Clres=[34,2,2,2,2,2,2,2,2,2]=[2]xClord
Structure of Tor=[2,2,2,2,2,2,2,2,2,2]
#Tor=1024           Cp=0.47898799604336105240349464751069989167
m=152125131763605   Clres=[2,2,2,2,2,2,2,2,2,2,2]=[2]xClord
Structure of Tor=[4,2,2,2,2,2,2,2,2,2,2]
#Tor=4096           Cp=0.50942162732997733185991268818214459006
m=-6541380665835015   Clres=[28284,2,2,2,2,2,2,2,2,2,2,2]
Structure of Tor=[2,2,2,2,2,2,2,2,2,2,2,2]
#Tor=4096           Cp=0.45680772041407406738242757697825579415
m=307444891294245705   Clres=[14,2,2,2,2,2,2,2,2,2,2,2,2]=[2]xClord
Structure of Tor=[2,2,2,2,2,2,2,2,2,2,2,2,2]
#Tor=8192           Cp=0.44755741322126195443312989366082458089
m=16294579238595022365   Clres=[2,2,2,2,2,2,2,2,2,2,2,2,2,2]=[2]xClord
Structure of Tor=[32,2,2,2,2,2,2,2,2,2,2,2,2,2]
#Tor=262144         Cp=0.56407742575833976013164891102271736655
m=-961380175077106319535   Clres=[1210734,2,2,2,2,2,2,2,2,2,2,2,2,2,2]
Structure of Tor=[2,2,2,2,2,2,2,2,2,2,2,2,2,2,2]
#Tor=32768          Cp=0.43039341330765014032907101774855577316
m=-58644190679703485491635   Clres=[1622526,2,2,2,2,2,2,2,2,2,2,2,2,2,2,2]
Structure of Tor=[2,2,2,2,2,2,2,2,2,2,2,2,2,2,2,2]
#Tor=65536          Cp=0.42308787191112227068703992014107942867
\end{verbatim}
\normalsize

We do not know an algorithm computing 
the ``exceptional elements'' of ${\mathcal T}_F$ as for $p$-class groups.

\smallskip
The case $p=3$ is similar and gives for instance for the $16$ cyclic cubic
fields of conductor $f=7 \cdot 13 \cdot 19 \cdot 31 \cdot 37$:

\footnotesize
\begin{verbatim}
P=x^3+x^2-661054*x+49725976       Cl=[3,3,3,3,3]
Structure of Tor=[9,3,3,3,3]
#Tor=729   Cp=0.45459180523024141673723101157712338880
P=x^3+x^2-661054*x+198463201      Cl=[6,6,3,3]
Structure of Tor=[9,3,3,3,3]
#Tor=729   Cp=0.45459180523024141673723101157712338880
P=x^3+x^2-661054*x+97321888       Cl=[3,3,3,3]
Structure of Tor=[3,3,3,3,3]
#Tor=243   Cp=0.37882650435853451394769250964760282400
P=x^3+x^2-661054*x-186270421      Cl=[39,3,3,3,3]
Structure of Tor=[9,3,3,3,3]
#Tor=729   Cp=0.45459180523024141673723101157712338880
P=x^3+x^2-661054*x-79179619       Cl=[3,3,3,3]
Structure of Tor=[3,3,3,3]
#Tor=81    Cp=0.30306120348682761115815400771808225920
P=x^3+x^2-661054*x-188253584      Cl=[6,6,3,3]
Structure of Tor=[9,3,3,3,3]
#Tor=729   Cp=0.45459180523024141673723101157712338880
P=x^3+x^2-661054*x+138968311      Cl=[3,3,3,3,3]
Structure of Tor=[3,3,3,3]
#Tor=81    Cp=0.30306120348682761115815400771808225920
P=x^3+x^2-661054*x-146607161      Cl=[3,3,3,3]
Structure of Tor=[3,3,3,3,3]
#Tor=243   Cp=0.37882650435853451394769250964760282400
P=x^3+x^2-661054*x-158506139      Cl=[3,3,3,3,3]
Structure of Tor=[3,3,3,3,3]
#Tor=243   Cp=0.37882650435853451394769250964760282400
P=x^3+x^2-661054*x-140657672      Cl=[3,3,3,3]
Structure of Tor=[3,3,3,3,3]
#Tor=243   Cp=0.37882650435853451394769250964760282400
P=x^3+x^2-661054*x+186564223      Cl=[6,6,3,3]
Structure of Tor=[3,3,3,3]
#Tor=81    Cp=0.30306120348682761115815400771808225920
P=x^3+x^2-661054*x+81456584       Cl=[3,3,3,3]
Structure of Tor=[3,3,3,3,3]
#Tor=243   Cp=0.37882650435853451394769250964760282400
P=x^3+x^2-661054*x+206395853      Cl=[3,3,3,3]
Structure of Tor=[3,3,3,3]
#Tor=81    Cp=0.30306120348682761115815400771808225920
P=x^3+x^2-661054*x-206102051      Cl=[3,3,3,3]
Structure of Tor=[3,3,3,3]
#Tor=81    Cp=0.30306120348682761115815400771808225920
P=x^3+x^2-661054*x+2130064        Cl=[12,12,3,3]
Structure of Tor=[3,3,3,3,3,3]
#Tor=729   Cp=0.45459180523024141673723101157712338880
P=x^3+x^2-661054*x+27911183       Cl=[3,3,3,3,3]
Structure of Tor=[3,3,3,3,3]
#Tor=243   Cp=0.37882650435853451394769250964760282400
\end{verbatim}
\normalsize

\smallskip
Now give some examples for $p=2$ and $p=3$ where the constant 
$\wt{\mathcal C}_{F}$ is rather large; the degree $p$ cyclic extensions 
$F$ are not necessary of the form $F_N$.

\smallskip
(i) $p=2$, $m$ divides $5 \cdot 7 \cdots 41 \cdot 43$.

\footnotesize
\begin{verbatim}
m=5005 Clres=[2,2,2] Clord =[2,2]
Structure of Tor=[4,2,2]
#Tor=16      Cp=0.65098051255057821664489247470558390764
m=1078282205 Clres=[4,2,2,2,2,2,2] Clord =[4,2,2,2,2,2]
Structure of Tor=[32,4,2,2,2,2,2]
#Tor=4096    Cp=0.79983769534579979852611700457236749058
m=215656441 Clres=[2,2,2,2,2,2] Clord =[2,2,2,2,2]
Structure of Tor=[8,2,2,2,2,2]
#Tor=256     Cp=0.57794783195778051534152523962133180636
m=436092044389001 Clres=[2,2,2,2,2,2,2,2,2,2] Clord =[2,2,2,2,2,2,2,2,2]
Structure of Tor=[32,2,2,2,2,2,2,2,2,2]
#Tor=16384   Cp=0.57575701869336876560350400103707496594
m=46189 Clres=[2,2,2] Clord =[2,2]
Structure of Tor=[16,2,2]
#Tor=64      Cp=0.77443028965455279095387556052161745644
m=221 Clres=[4] Clord =[2]
Structure of Tor=[16]
#Tor=16      Cp=1.0272342185833848333397010211662592994
m=435656388001 Clres=[2,2,2,2,2,2,2] Clord =[2,2,2,2,2,2]
Structure of Tor=[8,2,2,2,2,2,2]
#Tor=512     Cp=0.46554453503678235229309396294036417544
\end{verbatim}
\normalsize

\smallskip
We note the case of $m=221$ for which $\Cl_F^{\rm res} \simeq \Z/4\Z$,
$\Cl_F^{\rm ord} \simeq \Z/2\Z$, but ${\mathcal T}_F \simeq \Z/16\Z$
due to the $2$-adic regulator. This gives  the exceptional value
$\wt{\mathcal C}_{F,2} \approx 1.02723422$.

\smallskip
(ii) $p=3$, $f$ divides $13 \cdot 19 \cdot 31 \cdot 37 \cdot 43$.

\footnotesize
\begin{verbatim}
f=10621   P=x^3+x^2-3540*x-60579    Cl=[3,3]
Structure of Tor=[9,9,3]
#Tor=243     Cp=0.33667761382504192691963484073748684274
P=x^3+x^2-4060762*x-3150249179    Cl=[3,3,3,3]
Structure of Tor=[27,27,3,3,3]
#Tor=19683   Cp=0.60601970488507546845534271332747631693
f=12182287   P=x^3+x^2-4060762*x+187697459    Cl=[9,9,3,3,3]
Structure of Tor=[9,9,3,3,3]
#Tor=2187    Cp=0.47134865935505869768748877703248157983
f=12182287   P=x^3+x^2-4060762*x+2380509119    Cl=[3,3,3,3]
Structure of Tor=[9,3,3,3,3]
#Tor=729     Cp=0.40401313659005031230356180888498421129
f=12182287   P=x^3+x^2-4060762*x-2309671376    Cl=[3,3,3,3,3]
Structure of Tor=[9,9,3,3,3]
#Tor=2187    Cp=0.47134865935505869768748877703248157983
f=641173   P=x^3-x^2-213724*x-29968901    Cl=[6,6,3]
Structure of Tor=[9,3,3,3]
#Tor=243     Cp=0.33667761382504192691963484073748684274
f=392977   P=x^3-x^2-130992*x+6826156    Cl=[3,3,3]
Structure of Tor=[9,3,3,3]
#Tor=243     Cp=0.33667761382504192691963484073748684274
f=7657   P=x^3+x^2-2552*x+47360    Cl=[9,3,3]
Structure of Tor=[9,9,3]
#Tor=243     Cp=0.33667761382504192691963484073748684274
\end{verbatim}
\normalsize

\subsection{$p$-adic Brauer--Siegel conjecture versus $\varepsilon$-conjectures}

The $p$-adic Brauer--Siegel Conjecture \ref{conj1} concerns more
essentially the totally real case for the following reasons which are
yet given by the rank inequalities \eqref{rank1} and \eqref{rank2}.

\subsubsection{Analysis by means of CM-fields}
We have, with obvious notations and $p \ne 2$:
\begin{equation*}
\begin{aligned}
& \Cl_K = \Cl_K^-  \plus \Cl_K^+ , 
\hspace{0.8cm}{\mathcal T}_K = {\mathcal T}_K^- \plus {\mathcal T}_K^+, \\
& {\mathcal R}_K = {\mathcal R}_K^- \plus {\mathcal R}_K^+,
\hspace{0.6cm} {\mathcal W}_K = {\mathcal W}_K^- \plus {\mathcal W}_K^+; 
\end{aligned}
\end{equation*}

but we have the following properties which explain the differences 
between real fields and non real ones (under the Leopoldt conjecture):

\medskip
(i) $\order {\mathcal R}_K^- = 1$ since all the units 
of infinite order of $K$ are real;

\smallskip
(ii) $\order \Cl_K^- = \order \wt \Cl_K^{{}_-} \cdot \order 
{\rm Gal}(\wt K \cap H_K/K)^-$ where 
${\rm Gal}(\wt K \cap H_K/K)^- \simeq \Cl_K^-/ \wt \Cl_K^{{}_-}$
may be large (but with bounded rank) since 
${\rm Gal}(\wt K/K)^- \simeq \Z_p^{\frac{d}{2}}$ contrary to 
${\rm Gal}(\wt K/K)^+ \simeq \Z_p$;

\smallskip
(iii) $\order {\mathcal T}_K^- = \order \wt \Cl_K^{{}_-} \cdot \order {\mathcal W}_K^-$ 
is essentially equal to $\order \wt \Cl_K^{{}_-}$ since ${\mathcal W}_K^-$
is most often trivial and does not intervene in estimations of class 
groups for $d$ fixed;

\smallskip
(iv) $\order {\mathcal R}_K^+$ is the main $p$-adic invariant which may
be nontrivial for much primes $p$, even if we have conjectured in \cite{Gr9} 
that it is trivial for all $p$ large enough;

\smallskip
(v) $\order \Cl_K^+$ is essentially equal to $\order \wt \Cl^{{}_+}_K$
since the part of the Hilbert class field, contained in the cyclotomic
$\Z_p$-extension, is very limited;

\smallskip
(vi) $\order {\mathcal T}_K^+ =  \order \wt \Cl_K^{{}_+} \cdot 
\order {\mathcal R}_K^+ \cdot \order {\mathcal W}_K^+$
is thus essentially the product $\order \Cl_K^+ \cdot \order {\mathcal R}_K^+$.

\medskip
So if we assume that $\order {\mathcal T}_K^+$ is controled, we may
consider that $\order \Cl_K^+$ is much less than $\order {\mathcal T}_K^+$
because of the regulator; then, since $\order {\mathcal T}_K^-$ is 
independent of any regulator, it is mesured by a divisor of $\order \Cl_K^-$ 
(equal to $\order \wt \Cl_K^{{}_-}$) and by $\order {\mathcal W}_K^-$ controled, 
which explains (from the factor $[\wt K \cap H_K : K]$) a bigger order of magnitude 
of $\order \Cl_K^-$ regarding $\order {\mathcal T}_K^-$ than 
$\order \Cl_K^+$ regarding $\order {\mathcal T}_K^+$.

\subsubsection{Computation of $\wt{\mathcal C}_{K,p}$ for imaginary quadratic fields}

We shall illustrate the cases $p=2$, then $p=3$, in various intervals of negative 
discriminants to observe the local decreasing of the variable 
$\wt{\mathcal C}_{K,p}$ \eqref{CKp}.

\smallskip
Note that for $p>3$, the group ${\mathcal W}_K$ is trivial contrary to the cases
$p=2$ and $3$ where ${\mathcal W}_K$ may be $\Z/p\Z$, which must probably be
discarded in our considerations.

\medskip
{\bf (a) Case $p=2$}.
The case $p=2$ is interesting because of the influence of exceptional classes
and gives ($v_p(\order {\mathcal T}_K)$ in ${\sf vptor}$, $\wt{\mathcal C}_{K,p}$
in ${\sf Cp}$):

\smallskip
\footnotesize
\begin{verbatim}
{p=2;bD=10^8;BD=2*10^8;Lp=log(p);vp=0;n=20;
for(D=bD,BD,e=valuation(D,2);M=D/2^e;if(core(M)!=M,next);
if((e==1 || e>3)||(e==0 & Mod(M,4)!=-1)||(e==2 & Mod(M,4)==-1),next);
m=D;if(e!=0,m=D/4);P=x^2+m;K=bnfinit(P,1);Kpn=bnrinit(K,p^n);
C5=component(Kpn,5);Hpn0=component(C5,1);Hpn=component(C5,2);
Hpn1=component(Hpn,1);vptor=valuation(Hpn0/Hpn1,p)-(n-1);
if(vptor>vp,vp=vptor);if(vptor>=vp,Cp=vptor*Lp/log(sqrt(D));
print("D=",-D," m=",-m," vptor=",vptor," Cp=",Cp)))}

p=2, Interval [10^6, 2*10^6]
D=-1000011 m=-1000011    vptor=3   Cp=0.301029755983435929933445793
D=-1000020 m=-250005     vptor=3   Cp=0.3010295598834164958938994188
D=-1000036 m=-250009     vptor=4   Cp=0.4013722816881976053812061427
D=-1000132 m=-250033     vptor=5   Cp=0.5017118661610285687682449315
(...)
D=-1003620 m=-250905     vptor=5   Cp=0.5015854691511746133432777519
D=-1003940 m=-250985     vptor=6   Cp=0.6018886779424325532238667109
D=-1005843 m=-1005843    vptor=7   Cp=0.702107244840955486811297669
D=-1007492 m=-251873     vptor=8   Cp=0.8023131911871206028276870051
(...)
D=-1327972 m=-331993     vptor=8   Cp=0.7865966565831969109249007496
D=-1345476 m=-336369     vptor=9   Cp=0.8841001125437591214944446738
D=-1347524 m=-336881     vptor=10  Cp=0.982227596578129040877631145

p=2, Interval [10^7, 2*10^7]
D=-10000004 m=-2500001   vptor=3   Cp=0.25802570416574861895099915
(...)
D=-10000136 m=-2500034   vptor=3   Cp=0.25802549285588586651933610
D=-10000212 m=-2500053   vptor=4   Cp=0.34403382825873844184039068
D=-10000228 m=-2500057   vptor=5   Cp=0.43004224263528166242354902
(...)
D=-10001220 m=-2500305   vptor=5   Cp=0.43003959612044524303603555
D=-10001355 m=-10001355  vptor=7   Cp=0.6020549303754533561270683
(...)
D=-10028164 m=-2507041   vptor=10  Cp=0.8599356519990630566433445
D=-11423624 m=-2855906   vptor=10  Cp=0.8530415407428446759627785
D=-11434244 m=-2858561   vptor=11  Cp=0.9382920445879771130663980
D=-19227908 m=-4806977   vptor=11  Cp=0.9092149504336010969244116

p=2, Interval [10^8, 2*10^8]
D=-100000011 m=-100000011 vptor=2   Cp=0.15051499693318294862950
D=-100000020 m=-25000005  vptor=3   Cp=0.225772494296692430574969
(...)
D=-100000072 m=-25000018  vptor=3   Cp=0.225772487923331962924891
D=-100000120 m=-25000030  vptor=4   Cp=0.301029976053644338278634
(...)
D=-100000228 m=-25000057  vptor=4   Cp=0.301029958404363471726365
D=-100000324 m=-25000081  vptor=6   Cp=0.451544914074197327895496
(...)
D=-100009811 m=-100009811  vptor=6  Cp=0.45154258866246136591601
D=-100009988 m=-25002497   vptor=7  Cp=0.526799636159210448373252
(...)
D=-100042692 m=-25010673   vptor=9  Cp=0.677301796362621931199437
D=-100120215 m=-100120215  vptor=11 Cp=0.8277784989602807429303
D=-100703939 m=-100703939  vptor=11 Cp=0.8275173634473368234393
D=-101091716 m=-25272929   vptor=13 Cp=0.97777114254342282551717
D=-196241540 m=-49060385   vptor=13 Cp=0.94380528108729550144090
\end{verbatim}
\normalsize

\smallskip
One sees some influence of genus theory since, for $D=-101091716$, 
we have $\wt{\mathcal C}_{K,2} \approx 0.977771$ because of 
$\order {\mathcal T}_K = 2^{13}$, but to be put in relation with 
$\wt{\mathcal C}_{K,2} \approx 0.982227$ for $D=-1347524$, of the
first interval, with $\order {\mathcal T}_K = 2^{10}$. 

\smallskip
The structure of the class group given by PARI/GP is ${\sf [1024, 2, 2, 2]}$.

\smallskip
Then consider the program computing the structure of ${\mathcal T}_K$
\cite[Programme I, \S\,3.2]{Gr8} that we recall for the convenience of the reader 
(choose ${\sf p}$, ${\sf nt}$ such that ${\sf p^{\rm nt}}$ be a multiple of the 
exponent of ${\mathcal T}_K$, then the polynomial ${\sf P}$):

\smallskip
\footnotesize
\begin{verbatim}
{p=2;nt=32;P=x^2+101091716;K=bnfinit(P,1);Kpn=bnrinit(K,p^nt);
S=component(component(Kpn,1),7);r=component(component(S,2),2)+1;
Hpn=component(component(Kpn,5),2);L=List;e=component(matsize(Hpn),2);
R=0;for(k=1,e-r,c=component(Hpn,e-k+1);if(Mod(c,p)==0,R=R+1;
listinsert(L,p^valuation(c,p),1)));print("Structure of T: ",L)}
\end{verbatim}
\normalsize

Then we obtain ${\mathcal T}_K \simeq \Z/2^{10}\Z \times \Z/2^2\Z \times \Z/2\Z$. 
The difference comes from ${\mathcal W}_K \simeq \Z/2\Z$ 
(since $-101091716 \equiv -1 \pmod {16}$) and $[\wt K\cap H_K : K]=2$.

\smallskip
{\bf (b) Case $p=3$}.

\footnotesize
\begin{verbatim}
{p=3;bD=10^6;BD=2*10^6;Lp=log(p);vp=0;n=8;
for(D=bD,BD,e=valuation(D,2);M=D/2^e;if(core(M)!=M,next);
if((e==1 || e>3)||(e==0 & Mod(M,4)!=-1)||(e==2 & Mod(M,4)==-1),next);
m=D;if(e!=0,m=D/4);P=x^2+m;K=bnfinit(P,1);Kpn=bnrinit(K,p^n);
C5=component(Kpn,5);Hpn0=component(C5,1);Hpn=component(C5,2);
Hpn1=component(Hpn,1);vptor=valuation(Hpn0/Hpn1,p)-(n-1);
if(vptor>vp,vp=vptor);if(vptor>=vp,Cp=vptor*Lp/log(sqrt(D));
print("D=",-D," m=",-m," vptor=",vptor," Cp=",Cp)))}
p=3, Interval [10^6, 2*10^6]
D=-1000011 m=-1000011     vptor=1   Cp=0.1590402916116620131420348520
D=-1000020 m=-250005      vptor=1   Cp=0.1590401880079362819278196125
D=-1000043 m=-1000043     vptor=1   Cp=0.1590399232477087416304663599
(...)
D=-1020548 m=-255137      vptor=3   Cp=0.4764198506806243371783107010
D=-1021332 m=-255333      vptor=4   Cp=0.6351912130796763789096457580
D=-1022687 m=-1022687     vptor=5   Cp=0.7939129439088033794338891302
(...)
D=-1898859 m=-1898859     vptor=5   Cp=0.7599296140003213455753306574
p=3, Interval [10^7, 10^7+10^6]
D=-10002927 m=-10002927   vptor=3   Cp=0.40895365007950900178024254
D=-10003224 m=-2500806    vptor=4   Cp=0.54527052902405852884991426
(...)
D=-10065279 m=-10065279   vptor=4   Cp=0.54506139909286008239956811
D=-10066440 m=-2516610    vptor=5   Cp=0.68132187532448910070906763
(...)
D=-14316744 m=-3579186    vptor=5   Cp=0.66675746065456780942779488
D=-14547531 m=-14547531   vptor=6   Cp=0.79933316809564910995167969
(...)
D=-19767512 m=-4941878    vptor=6   Cp=0.78474406738375920976115602
p=3, Interval [10^8, 10^8+10^7]
D=-100075971 m=-100075971  vptor=4   Cp=0.477101585455621088296257
D=-100080003 m=-100080003  vptor=5   Cp=0.596375677517090310811391
(...)
D=-100787315 m=-100787315  vptor=5   Cp=0.596147767754605081482216
D=-100867844 m=-25216961   vptor=6   Cp=0.715346318656502973478053
(...)
D=-117344127 m=-117344127  vptor=6   Cp=0.709521342553855083402930
D=-119846559 m=-119846559  vptor=7   Cp=0.826835890443221508331985
(...)
D=-135140024 m=-33785006   vptor=7   Cp=0.821531794828164970116186
D=-136159455 m=-136159455  vptor=8   Cp=0.938516745792290367614873
\end{verbatim}
\normalsize

\smallskip
For $D_K=-136159455$, we get
${\mathcal W}_K \simeq \Z/3\Z$, ${\mathcal T}_K \simeq \Z/3^7\Z \times \Z/3\Z$ 
and $\Cl_K \simeq \Z/3^6\Z \times \Z/3\Z$
(using the instruction ${\sf quadclassunit(-136159455)}$).

\subsubsection{Conclusion}
Consider a prime $p$ and a fixed degree $d$.
Assume, tentatively, a strong $\varepsilon$-conjecture for the groups 
${\mathcal T}_K$ in the case of totaly real number fields $K$ of degree $d$, 
which implies a strong $\varepsilon$-conjecture for the $p$-class groups.
We then have the existence, for all $\varepsilon>0$, of a constant 
$\wt C_{d,p,\varepsilon}$ such that:
$${\rm log}(\order{\mathcal T}_K) 
\leq {\rm log}(\wt C_{d,p,\varepsilon}) + \varepsilon \cdot {\rm log}\big (\sqrt {D_K}\,\big); $$

then introduce the function $\wt{\mathcal C}_{K,p}$:
\begin{equation}\label{ckp}
\wt{\mathcal C}_{K,p}  := \frac{{\rm log}(\order {\mathcal T}_K)}
{{\rm log}\big (\sqrt {D_K}\,\big)} 
 \leq  \frac{{\rm log}(\wt C_{d,p,\varepsilon})}{{\rm log}\big (\sqrt {D_K}\,\big)}  + \varepsilon. 
\end{equation}

So, when $D_K \to \infty$, we get $\wt{\mathcal C}_{K,p} = \varepsilon +o(1)$.
But in \cite[\S\,5.3]{Gr2}, we have proved that there exist explicit infinite families 
of real quadratic fields for which $\wt{\mathcal C}_{K,p} \approx 1$ (contradiction).

\smallskip
We obtain, generalizing to arbitrary degrees, the following heuristic:

\medskip
{\it There is no absolute strong $\varepsilon$-conjecture for the ${\mathcal T}_K$
groups of totally real number fields $K$ and ${\mathcal T}_K$ is essentially 
governed by the normalized $p$-adic regulator ${\mathcal R}_K$. Nevertheless,
as for $p$-class groups, one may conjecture that the exceptions to the
strong $\varepsilon$-conjecture are due to sparse subfamilies of density zero.}

\smallskip
Recall that, from \eqref{rank1} and \eqref{rank2}, the $p$-rank
$\varepsilon$-conjecture for the ${\mathcal T}_K$ does exist if and only 
if the $p$-rank $\varepsilon$-conjecture does exist for the 
$p$-class groups $\Cl_K$.

\subsection{Reflection theorem and $p$-rank $\varepsilon $-conjectures}\label{refl}
Another justification of the above comments is to recall the reflection theorem 
\cite{Gr7} which exchanges, roughly speaking, ``imaginary components'' of
$p$-class groups $\Cl$ with ``real components'' of
$p$-torsion groups ${\mathcal T}$, and conversely, subject to consider 
fields $K$ containing the group $\mu_p$ of $p$th roots of unity.

\smallskip
In full generality, the following result precises \eqref{rank1}
and \eqref{rank2} when $\mu_p\subset K$:

\begin{proposition}[{\cite[Theorem III.4.2.2]{Gr1}}] Let $K$ be a 
number field containing $\mu_p$ and fulfilling Leopoldt's conjecture at~$p$. 
Then we have the rank formula (reflection theorem)\,\footnote{The 
mentions ``ord'', ``res'' are related to the case $p=2$; for
$p > 2$, since $\mu_p \subset K$, the two notions coincide.}
${\rm rk}_p({\mathcal T}_K^{\rm ord}) = 
{\rm rk}_p(\Cl_K^{S\,{\rm res}}) + \order S -1$,  
where $S$ is the set of $p$-places of $K$ and $\Cl_K^{S\,{\rm res}}$ the
$S$-class group $\Cl_K^{\rm res}/\cl^{\rm res}(S)$.
\end{proposition}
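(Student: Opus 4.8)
The plan is to prove the rank formula by Kummer theory, which is at our disposal because $\mu_p\subset K$. First I would translate the left-hand side into a dimension count: under Leopoldt's conjecture, the Galois group of the maximal abelian $p$-ramified pro-$p$-extension of $K$ that is in addition split at all real places — call its torsion subgroup ${\mathcal T}_K^{\rm ord}$ (for $p>2$ this extension is $H_K^{\rm pr}$ itself, since $\mu_p\subset K$ forces $r_1=0$) — is isomorphic to $\Z_p^{\,r_2+1}\oplus{\mathcal T}_K^{\rm ord}$, with free part ${\rm Gal}(\wt K/K)$; hence its maximal elementary abelian $p$-quotient has $\F_p$-dimension $(r_2+1)+{\rm rk}_p({\mathcal T}_K^{\rm ord})$. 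By Kummer duality this quotient is dual to
$$W:=\big\{\,\ov a\in K^\times/K^{\times p}\ :\ K(\sqrt[p]{a})/K\ \text{is unramified outside}\ S\ \text{and split at the real places}\,\big\},$$
so it is enough to prove that $\dim_{\F_p}W=r_2+|S|+{\rm rk}_p(\Cl_K^{S\,{\rm res}})$.

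The second step is a dévissage of $W$. The ramification conditions on a representative $a$ say precisely that $(a)=\mathfrak a_S\cdot\mathfrak b^{\,p}$ with $\mathfrak a_S$ supported on $S$; the assignment $\ov a\mapsto[\mathfrak b]$ is well defined modulo $S$-ideals and $p$-th powers, takes values in the $p$-torsion of the restricted-sense $S$-class group, is surjective onto $\Cl_K^{S\,{\rm res}}[p]$ (a class killed by $p$ lifts to a suitable $a$), and has kernel equal to the image of the $S$-units modulo $p$-th powers — the \emph{totally positive} $S$-units when $p=2$, in order to respect the condition of splitting at $\infty$. This yields the exact sequence
$$1\too E_{K,S}^{+}\big/(E_{K,S})^{p}\too W\too\Cl_K^{S\,{\rm res}}[p]\too 1,$$
where $E_{K,S}$ is the group of $S$-units and $E_{K,S}^{+}$ its totally positive subgroup ($E_{K,S}^{+}=E_{K,S}$ for $p>2$), so $\dim_{\F_p}W=\dim_{\F_p}\big(E_{K,S}^{+}/(E_{K,S})^{p}\big)+{\rm rk}_p(\Cl_K^{S\,{\rm res}})$. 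I would then evaluate the first term by Dirichlet's $S$-unit theorem: $E_{K,S}\simeq\mu_K\times\Z^{\,r_1+r_2+|S|-1}$, and $\mu_p\subseteq\mu_K$ adds one further dimension modulo $p$-th powers. For $p>2$, where $r_1=0$, this gives $\dim_{\F_p}(E_{K,S}/(E_{K,S})^{p})=r_2+|S|$, and substituting back yields exactly ${\rm rk}_p({\mathcal T}_K^{\rm ord})={\rm rk}_p(\Cl_K^{S\,{\rm res}})+|S|-1$.

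For $p=2$ the same final identity must survive once $E_{K,S}$ is replaced by $E_{K,S}^{+}$, and this is the step I expect to be the main obstacle. The $\F_2$-dimension lost in passing from $E_{K,S}$ to $E_{K,S}^{+}$ is the rank of the signature homomorphism on $S$-units; one has to verify that this loss is compensated on the other side — that the difference between ${\mathcal T}_K^{\rm ord}$ (split at $\infty$) and the $\infty$-ramified torsion group implicitly used in the first step exactly matches the difference between the ordinary and the restricted-sense $S$-class groups — so that after cancellation only the clean terms ${\rm rk}_2(\Cl_K^{S\,{\rm res}})$ and $|S|-1$ remain. This signature and narrow-versus-ordinary bookkeeping is precisely the reason the statement pairs ``ord'' on ${\mathcal T}_K$ with ``res'' on $\Cl_K^{S}$; for $p>2$ it is entirely vacuous.
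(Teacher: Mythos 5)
The paper gives no proof of this proposition: it is quoted verbatim from \cite[Theorem III.4.2.2]{Gr1}, so I can only compare your argument with the standard one in that reference. Your strategy --- Kummer duality between the exponent-$p$ quotient of the $p$-ramified ordinary Galois group and the radical group $W$, followed by a d\'evissage of $W$ through $S$-units and the $S$-class group --- is exactly the classical route, and for $p>2$ your argument is complete and correct: $\mu_p\subset K$ forces $r_1=0$, the exact sequence $1\to E_{K,S}/E_{K,S}^p\to W\to \Cl_K^S[p]\to 1$ holds (including the check $E_{K,S}\cap K^{\times p}=E_{K,S}^p$), Dirichlet gives $\dim_{\F_p}(E_{K,S}/E_{K,S}^p)=r_2+\order S$, and equating with $\dim_{\F_p}W=r_2+1+{\rm rk}_p({\mathcal T}_K)$ yields the formula.

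For $p=2$, however, the displayed exact sequence is false as written, and this is a genuine gap rather than mere bookkeeping. The assignment $\ov a\mapsto[\mathfrak b]$ is not well defined into the \emph{restricted} $S$-class group: replacing $a$ by $au^2$ replaces $\mathfrak b$ by $\mathfrak b(u)$, and $(u)$ need not have a totally positive generator. The map is only well defined into $\Cl_K^{S\,{\rm ord}}$, and its image there is not $\Cl_K^{S\,{\rm res}}[2]$ but the image of $\Cl_K^{S\,{\rm res}}[2]$ under $\Cl_K^{S\,{\rm res}}\to\Cl_K^{S\,{\rm ord}}$ (a class $[\mathfrak b]$ is hit precisely when $\mathfrak b^2\mathfrak a_S^{-1}$ admits a \emph{totally positive} generator). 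A concrete counterexample to your sequence is $K=\Q(\sqrt{221})$, which appears in the paper: here $2$ is inert, $\order S=1$, $\Cl_K^{\rm res}\simeq\Z/4\Z$, $\Cl_K^{\rm ord}\simeq\Z/2\Z$, the fundamental unit is totally positive, and one computes $\dim_{\F_2}W=2=\dim_{\F_2}\big(E_{K,S}^+/(E_{K,S})^2\big)$ while $\Cl_K^{S\,{\rm res}}[2]\simeq\Z/2\Z$ is nontrivial --- so $W\to\Cl_K^{S\,{\rm res}}[2]$ cannot be surjective. The formula itself survives because the corank of the actual image inside $\Cl_K^{S\,{\rm res}}[2]$ equals $r_1-\rho$, where $\rho$ is the rank of the signature map on $E_{K,S}$, and this is exactly the amount by which $\dim_{\F_2}\big(E_{K,S}^+/(E_{K,S})^2\big)=r_1+r_2+\order S-\rho$ exceeds $r_2+\order S$; the two corrections cancel. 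Your closing paragraph correctly anticipates that a compensation is needed, but locates it in the wrong place (between ordinary and restricted versions of ${\mathcal T}_K$ --- the Kummer duality step already pins down ${\mathcal T}_K^{\rm ord}$ exactly) and does not carry it out; as it stands the $p=2$ case is not proved.
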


So, ${\rm rk}_p(\Cl_K^{S\,{\rm res}}) = 
{\rm rk}_p({\mathcal T}_K^{\rm ord}) - (\order S -1)$ yields:
\begin{equation*}
\begin{aligned}
{\rm rk}_p(\Cl_K^{\rm res}) & \leq 
{\rm rk}_p(\Cl_K^{S\,{\rm res}}) + {\rm rk}_p(\cl^{\rm res}(S)) \\
& \leq {\rm rk}_p({\mathcal T}_K^{\rm ord}) - \big [ (\order S -1) 
- {\rm rk}_p(\cl^{\rm res}(S))\big ]
\leq {\rm rk}_p({\mathcal T}_K^{\rm ord}) +1 ,
\end{aligned}
\end{equation*}

giving :
$$-1 \leq {\rm rk}_p({\mathcal T}_K^{\rm ord}) - {\rm rk}_p(\Cl_K^{\rm res})
\leq \order S -1; $$

since $\order S - 1$ is bounded in the family of number fields, 
of fixed degree $d$, this relation shows that any $p$-rank
$\varepsilon $-conjecture, true for an invariant, is fulfilled by the other.
When $\mu_p \not\subset K$, one must use 
the field $K(\mu_p)$ and the general reflection theorem with 
$p$-adic characters (\cite[Theorem II.5.4.5]{Gr1}, \cite{Gr7}).

\medskip
The reflection theorem has been used in \cite{EV}, in a different 
manner, using many split primes in $K(\mu_p)$, the notion of Arakelov 
class group (see e.g., \cite{Sc3}), and the following analytic explanation
by the authors:

\smallskip
{\it Roughly, the point is that small non-inert primes in a number 
field represent elements of the class group which tend not to satisfy 
any relation with small coefficients. Thus the existence of
many such primes contributes significantly to the quotient of the 
class group by its $\ell$-torsion, yielding the desired upper bounds.}

\medskip
It would be interesting to deepen these approaches that have 
connections through class field theory, complex and $p$-adic 
analytic methods.

\section*{Acknowledgments} We thank Peter Koymans and 
Carlo Pagano for fruitful information about various approaches 
(density results  v.s. $\varepsilon$-conjectures) concerning 
$p$-class groups in degree $p$ cyclic fields.


\begin{thebibliography}{CS79}

\bibitem{AM} {\scshape Adam, M.; Malle, G.} A class group heuristic based on the 
distribution of 1-eigenspaces in matrix groups, {\em J. Number Theory} 
{\bf 149} (2015), 225--235.\par
\mrev{3296009} \zbl{1371.11145}

\bibitem{BH} {\scshape Banerjee, K.; Hoque, A.} From Picard groups of hyperelliptic 
surfaces to class groups of families of quadratic fields (preprint 2019).\par
\url{https://arxiv.org/pdf/1903.04210}

\bibitem{Ch} {\scshape Chevalley, C.} Sur la th\'eorie du corps de classes dans 
les corps finis et les corps locaux, {\em Jour. of the Fac. of Sc., Tokyo, Sec. I}, 
{\bf 2} (1933), 365--476.\zbl{0008.05301}

\bibitem{CL}  {\scshape Cohen, H.; Lenstra Jr., H.W.} Heuristics on class 
groups of number fields, {\em In Number theory, Noordwijkerhout 1983 
(Noordwijkerhout, 1983)}, 33--62, Lecture Notes in Math. {\bf 1068}, 
Springer, Berlin, 1984.
\mrev{0756082} \zbl{0558.12002}

\bibitem{CM}  {\scshape Cohen, H.; Martinet, J.} \'Etude heuristique 
des groupes de classes des corps de nombres, {\em Journal f?r die Reine 
und Angewandte Mathematik} {\bf 404} (1990), 39--76.\par
\mrev{1037430} \zbl{0699.12016}
\url{https://eudml.org/doc/153196}

\bibitem{Da}  {\scshape Daileda, R.C.} Non-abelian number fields with 
very large class numbers, {\em Acta Arithmetica,}
{\bf 125} (2006), no. 3, 215--255. 
\mrev{2276192} \zbl{1158.11044} 

\bibitem{DaKM} {\scshape Daileda, R.C.; Krishnamoorthy, R.; Malyshev, A.}  
Maximal class numbers of CM number fields, 
{\em J. Number Theory} {\bf 130} (2010), no. 4, 936--943.\par
\mrev{2600412} \zbl{1185.11065}
\url{https://doi.org/10.1016/j.jnt.2009.09.013}

\bibitem{DJ}  {\scshape Delaunay, C.; Jouhet, F.} The Cohen--Lenstra 
heuristics, moments and $p^j$-ranks of some groups, {\em Acta Arithmetica}
(to appear).\url{https://arxiv.org/pdf/1303.7337}

\bibitem{EPW} {\scshape Ellenberg, J.S.;  Pierce, L.B.; Wood, M.M.}
On $\ell$-torsion in class groups of number fields (preprint 2017).
\url{https://arxiv.org/pdf/1606.06103}

\bibitem{EV} {\scshape Ellenberg, J.S.; Venkatesh, A.} Reflection principles 
and bounds for class group torsion, {\em Int. Math. Res. Not.} {\bf 2007}
(2007), no. 1.
\mrev{2331900} \zbl{1130.11060}\par
\url{https://doi.org/10.1093/imrn/rnm002}

\bibitem{Fr1} {\scshape Fr\"ohlich, A.} The generalization of a theorem of 
L. R\'edei's,  {\em Qart. Jour. of math. Oxford} {\bf 2} (1954), no. 5,
13--140. \mrev{0066425} \zbl{0058.26903}\par
\url{https://doi.org/10.1093/qmath/5.1.130}

\bibitem{Fr2} {\scshape Fr\"ohlich, A.}  The genus field and genus group 
in finite number fields I,  {\em Mathematika}  {\bf 6} (1959), 40--46. 
\mrev{0113868} \zbl{0202.33005}\par
\url{https://doi.org/10.1112/S0025579300001911}

\bibitem{Fu} {\scshape Furuta, Y}. The genus field and genus number in 
algebraic number fields,  {\em Nagoya Math. J.}  {\bf 29} (1967), 281--285. 
\mrev{0209260} \zbl{0166.05901}\par
\url{https://doi.org/10.1017/S0027763000024387}.

\bibitem{FW} {\scshape Frei, C.; Widmer,  M.} Average bounds for the 
$p$-torsion in class groups of cyclic extensions (preprint 2018). 
\url{https://arxiv.org/pdf/1709.09934}.

\bibitem{Ge} {\scshape Gerth, III, F.} Densities for certain $\ell$-ranks 
in cyclic fields of degree $\ell^n$, {\em Compos. Math.} {\bf 60} (1986), no. 3, 295--322.
\mrev{869105} \zbl{0608.12008}\par
\url{http://www.numdam.org/article/CM_1986__60_3_295_0.pdf}

\bibitem{Gr1} {\scshape Gras, G.} Class Field Theory: from theory to practice, corr. 
2nd ed., {\em Springer Monographs in Mathematics, Springer} (2005), xiii+507 pages.\par
\mrev{1941965} \zbl{1019.11032}
\url{https://doi.org/10.1007/978-3-662-11323-3}

\bibitem{Gr2} {\scshape Gras, G.} Heuristics and conjectures in direction of 
a $p$-adic Brauer--Siegel theorem, {\em Math. Comp.} (2018).
\url{https://doi.org/10.1090/mcom/3395}

\bibitem{Gr3} {\scshape Gras, G.} Invariant generalized ideal classes --
structure theorems for $p$-class groups in $p$-extensions, 
{\em Proc. Indian Acad. Sci. Math. Sci} {\bf 127} (2017), no. 1, 1--34.\par
\mrev{3605238} \zbl{Zbl 06731203}
\url{https://doi.org/10.1007/s12044-016-0324-1}

\bibitem{Gr4}  {\scshape Gras, G.} Sur les $p$-classes d'id\'eaux 
dans les extensions cycliques relatives de degr\'e premier $p$, I,
{\em Annales de l'Institut Fourier} {\bf 23}  (1973), no.  3, 1--48. 
\mrev{360519} \zbl{0276.12013} \url{https://doi.org/10.5802/aif.471}

\bibitem{Gr5}  {\scshape Gras, G.} Sur les $p$-classes d'id\'eaux 
dans les extensions cycliques relatives de degr\'e premier $p$, II, 
{\em Annales de l'Institut Fourier} {\bf 23}  (1973), no. 4, 1--44.\par
\mrev{360519}  \zbl{0276.12013}
\url{https://doi.org/10.5802/aif.480}

\bibitem{Gr6} {\scshape Gras, G.} The $p$-adic Kummer-Leopoldt Constant:
Normalized $p$-adic Regulator, {\em Int. J. Number Theory} {\bf 14} (2018), 
no. 2, 329--337. \mrev{3761496} \zbl{1393.11071}\par
\url{https://doi.org/10.1142/S1793042118500203}

\bibitem{Gr7} {\scshape Gras, G.}  Th\'eor\`emes de r\'eflexion,
{\em J. Th\'eor. Nombres Bordeaux} {\bf 10} (1998), no. 2, 399--499.
\mrev{1828251}  \zbl{0949.11058}\par
\url{http://www.numdam.org/item/JTNB_1998__10_2_399_0/}

\bibitem{Gr8} {\scshape Gras, G.}  On $p$-rationality of number fields. 
Applications -- PARI/GP programs, {\em Publ. Math. Fac. Sci. Besan\c con
(Th\'eorie des Nombres)}, Ann\'ees 2017/2018.
\url{https://arxiv.org/pdf/1709.06388} 
\url{https://arxiv.org/pdf/1904.10707}

\bibitem{Gr9} {\scshape Gras, G.}  Les $\theta$-r\'egulateurs locaux d'un nombre 
alg\'ebrique : Conjectures $p$-adiques, {\em Canadian Journal of 
Mathematics} {\bf 68} (2016), no. 3, 571--624.\par
\mrev{3492629} \zbl{1351.11033}
\url{http://dx.doi.org/10.4153/CJM-2015-026-3}

\bibitem{In} {\scshape Inaba, E.} \"Uber die Struktur der $\ell$-klassengruppe zyklischer 
Zahlk\"orper von Prim\-zahlgrad $\ell$, {\em J. Fac. Sci. Univ. 
Tokyo, Sect. I} {\bf 4} (1940), 61--115.\zbl{0024.01002}

\bibitem{J1} {\scshape Jaulent, J.-F.}  Th\'eorie $\ell$-adique globale du  corps de classes, 
{\em J. Th\'eorie des Nombres de Bordeaux} {\bf 10} (1998), no. 2, 355--397.
\url{http://www.numdam.org/article/JTNB_1998__10_2_355_0.pdf}
\mrev{1828250}  \zbl{0938.11052}

\bibitem{J2} {\scshape Jaulent, J.-F.} Unit\'es et classes dans les extensions 
m\'etab\'eliennes de degr\'e $np^s$ sur un corps de nombres
alg\'ebriques, {\em Ann. Inst. Fourier (Grenoble)} {\bf 31} (1981), no.~1, ix--x, 39--62.
\mrev{0613028 } \zbl{0436.12007}\par
\url{http://www.numdam.org/article/AIF_1981__31_1_39_0.pdf/}

\bibitem{KP}  {\scshape Koymans, P.; Pagano, C.}
On the distribution of $\Cl(K)[\ell^\infty]$ for degree $\ell$ cyclic fields
(preprint 2018) \url{https://arxiv.org/pdf/1812.06884}

\bibitem{Mai} {\scshape Maire, C.} Genus theory and governing fields, 
{\em New York J. Math.} {\bf 24} (2018), 1056--1067. 
\mrev{3890964} \zbl{06982028} \url{http://nyjm.albany.edu/j/2018/24-50.html}

\bibitem{Mal} {\scshape Malle, G.} On the distribution of class groups 
of number fields, {\em Experiment. Math.} {\bf 19} (2010), 465--474. 
\url{https://projecteuclid.org/euclid.em/1317758105}
\mrev{2011m:11224} \zbl{1297.11139}

\bibitem{May} {\scshape Maynar, J.} On the Brun--Titchmarsh theorem,
{\em Acta Arithmetica} {\bf 157} (2013), No. 3, 249--296. \mrev{3019418}
\zbl{1321.11099} \url{https://eudml.org/doc/279293}

\bibitem{P} The PARI Group,  PARI/GP, version \texttt{2.9.0}, 
{\em Universit\'e de Bordeaux} (2016).\par
\url{http://pari.math.u-bordeaux.fr/}

\bibitem{P-TB-W} {\scshape Pierce, L.B.; Turnage-Butterbaugh, C.L.; Wood, M.M.}
On a conjecture for $p$-torsion in class groups of number fields: from the 
perspective of moments (preprint 2019).
\url{https://arxiv.org/pdf/1902.02008}

\bibitem{Ra} {\scshape  Razar, M.J.} Central and genus class fields 
and the Hasse norm theorem,  {\em Compositio Math.} {\bf 35} (1977), 
no. 3, 281--298. 
\mrev{0466073} \zbl{0376.12006}\par
\url{http: //www.numdam.org/item?id=CM_1977__35_3_281_0.1057}

\bibitem{RR} {\scshape Redei,  L.; Reichardt, H.} Die Anzahl der durch 
$4$ teilbaren Invarianten der Klassengruppe eines beliebigen quadratischen 
Zahlk\"orpers, {\em J. f.d.r.u.a. Math.} {\bf 170} (1933). 
\zbl{0007.39602}

\bibitem{Sc1} {\scshape Schoof, R.} Class numbers of real cyclotomic fields 
of prime conductor, {\em Math. Comp.} {\bf 72} (2003), no. 242, 913--937.
\mrev{1954975} \zbl{1052.11071}\par
\url{https://doi.org/10.1090/S0025-5718-02-01432-1}

\bibitem{Sc3} {\scshape Schoof, R.} Minus class groups of the fields of 
the $\ell$-th roots of unity, {\em Math. Comp.} {\bf 67} (1998), no. 67, 1225--1245.
\url{http://www.mat.uniroma2.it/~schoof/minus.pdf}
\mrev{1458225} \zbl{0902.11043}\par

\bibitem{Sc2} {\scshape Schoof, R.}  Computing Arakelov class groups,
Buhler, J.P. (ed.) et al., Algorithmic Number Theory, 
{\em Cambridge University Press, MSRI Publications} {\bf 44} (2008), 447--495.
\mrev{2467554} \zbl{1188.11076}\par
\url{https://www.mat.uniroma2.it/~schoof/14schoof.pdf}

\bibitem{St} {\scshape Stevenhagen, P.} R\'edei-matrices and applications, 
{\em  Number Theory Paris 1992--93, Cambridge University 
Press, London Math. Soc. Lecture Notes, Cambridge} (1995), 245--260. 
\url{https://doi.org/10.1017/CBO9780511661990.015}

\bibitem{T} {\scshape Tenenbaum, G.} Introduction \`a la th\'eorie analytique 
et probabiliste des nombres, $3^e$ \'edition revue et augment\'ee, 
{\em Coll. \'Echelles}, Belin 2008.

\bibitem{TV} {\scshape Tsfasman, M.; Vlad\u{u}\c{t}, S.}  Infinite global fields 
and the generalized Brauer--Siegel theorem, Dedicated to Yuri I. Manin on the 
occasion of his $65$th birthday, {\em Moscou Math. J.} {\bf 2} (2002), no. 2, 329--402.
\mrev{1944510} \zbl{1004.11037}\par
\url{http://www.ams.org/distribution/mmj/vol2-2-2002/tsfasman-vladuts.pdf}

\bibitem{Wa} {\scshape Washington, L.C.} Introduction to cyclotomic fields, 
GTM {\bf 83}, {\em Springer-Verlag, New York} 1997, {\rm xiv}+487 pp.
\mrev{1421575} \zbl{0484.12001}

\bibitem{W} {\scshape Widmer, M.} Bounds for the $p$-torsion in class 
groups, {\em Bull. Lond. Math. Soc.} {\bf 50} (2018), 124--13.
\url{https://doi.org/10.1112/blms.12113}

\bibitem{Zy}  {\scshape Zykin, A.I.} Brauer--Siegel and 
Tsfasman--Vlad\u{u}\c{t} theorems for 
almost normal extensions of global fields, {\em Moscou Math. J.}
{\bf 5} (2005), no. 4, 961--968. \mrev{2267316} \zbl{1125.11062} 
\ \url{http://www.ams.org/distribution/mmj/vol5-4-2005/zykin.pdf}

\end{thebibliography}
\end{document}